\theoremstyle{plain}
\newtheorem{theorem}{Theorem}[section]
\newtheorem{hypothesis}{Hypothesis}
\newtheorem{lemma}[theorem]{Lemma}
\newtheorem*{de-lemma}{Lemma}
\newtheorem{corollary}[theorem]{Corollary}
\theoremstyle{remark}
\newtheorem*{remark}{Remark}
\newtheorem*{remarks}{Remarks}
\theoremstyle{definition}
\DeclareMathOperator{\dv}{div}
\DeclareMathOperator{\sgn}{sgn}
\newcommand{\dd}{\mathrm{d}}
\newcommand{\R}{\mathbb{R}}
\newcommand{\SF}{\mathbb{S}}
\newcommand{\n}{\textbf{\em n}}
\begin{document}

\title[A new proof for the existence of an equivariant entire solution]{A new proof for the existence of an equivariant entire solution connecting the minima of the potential for the system $\Delta u - W_u (u) = 0$}
\author{Nicholas D.\ Alikakos} 
\address{Department of Mathematics\\ University of Athens\\ Panepistemiopolis\\ 15784 Athens\\ Greece \and Institute for Applied and Computational Mathematics\\ Foundation of Research and Technology -- Hellas\\ 71110 Heraklion\\ Crete\\ Greece} 
\email{\href{mailto:nalikako@math.uoa.gr}{\texttt{nalikako@math.uoa.gr}}}
\thanks{The author was supported by Kapodistrias grant No.\ 15/4/5622 at the University of Athens.}

\begin{abstract}
Recently, Giorgio Fusco and the author in \cite{alikakos-fusco} studied the system $\Delta u - W_u (u) = 0$ for a class of potentials that possess several global minima and are invariant under a general finite reflection group, and established existence of equivariant solutions connecting the minima in certain directions at infinity, together with an estimate. In this paper a new proof is given which, in particular, avoids the introduction of a pointwise constraint in the minimization process.
\end{abstract}

\maketitle

\section{Introduction}
The study of the system
\begin{equation}\label{system}
\Delta u - W_u(u) = 0, \text{ for } u: \R^n \to \R^n,
\end{equation}
where $W: \R^n \to \R$ and $W_u := (\partial W / \partial u_1, \dots, \partial W / \partial u_n)^{\top}$, under symmetry hypotheses on the potential $W$ was initiated in Bronsard, Gui, and Schatzman \cite{bronsard-gui-schatzman}, where existence for the case $n=2$ with the symmetries of the equilateral triangle was settled. About twelve years later this work was followed by Gui and Schatzman \cite{gui-schatzman}, where the case $n=3$ for the symmetry group of tetrahedron was established. The corresponding solutions are known as the {\em triple junction} and the {\em quadruple junction} respectively. This class of solutions is characterized by the fact that they {\em connect} the $N$ global minima of the potential $W$, that is,
\begin{equation}\label{connect}
\lim_{\lambda\to+\infty} u(\lambda\eta_i) = a_i, \text{ for } i=1,\dots,N,
\end{equation}
for certain unit vectors $\eta_i\in\SF^{n-1}$, where $\SF^{n-1}\subset\R^n$ is the unit sphere. These solutions are related to minimal surface complexes, and particularly to the singular points there (see Taylor \cite{taylor}, Dierkes {\em et al.} \cite{dhkw1,dhkw2}) via the blow-down limit $u_\varepsilon(x) := u(x/\varepsilon)$ (see Baldo \cite{baldo}). Recently in \cite{alikakos-fusco} certain general hypotheses on $W$ were identified and the problem was settled for general dimension $n$ and for any reflection group $G$ on $\R^n$.

In this paper we want to give a new derivation of this result, which is based on a positivity property of the gradient flow associated to \eqref{system} and comparison arguments involving subharmonic functions, ingredients already existing in \cite{alikakos-fusco}, but now supplemented with a Kato-type inequality and the De Giorgi oscillation lemma. The present paper is self-contained. Our hope is that this simpler proof will be more adaptable to the general case of a potential $W$ without symmetry requirements. In order to bring out clearly the underlying ideas, we refrain from any generalization which could complicate the technical part.

\subsection{Notation}
We denote by $B_R$ the ball of radius $R>0$ centered at the origin and by $W_{\mathrm{E}}^{1,2}(B_R; \R^n)$ the subspace of {\em equivariant maps}, that is, $u(gx) = g u(x)$, for all $g \in G$ and $x \in \R^n$.  We also denote by $\langle\cdot ,\cdot \rangle$ the Euclidean inner product, by $| \cdot |$ the Euclidean norm, and by $d(x, \partial D)$ the distance of $x$ from $\partial D$. In the case of finite groups $G$, the notation $|G|$ stand for the number of elements of the group.

We denote the functional associated to \eqref{system} by
\begin{equation}\label{action}
J(u) = \int_{\R^n} \left\{ \frac{1}{2} |\nabla u|^2 + W(u) \right\} \dd x.
\end{equation}

A {\em Coxeter group} is a finite subgroup of the orthogonal group $O(\R^n)$, generated by a set of reflections. A {\em reflection} $\gamma \in G$ is associated to the hyperplane 
\[ \pi_\gamma = \{ x\in\R^n \mid \langle x, \eta_\gamma \rangle = 0 \}, \]
via 
\[ \gamma x = x - 2 \langle x, \eta_\gamma \rangle \eta_\gamma, \text{ for } x \in \R^n, \]
where $\eta_\gamma \in \SF^{n-1}$ is a unit vector. Every finite subgroup of $O(\R^n)$ has a {\em fundamental region}\footnote{See \cite{grove-benson} or \cite{humphreys}.}, that is, a subset $F \subset \R^n$ with the following properties:
\begin{enumerate}
\item $F$ is open and convex,
\item $F \cap gF = \varnothing$, for $I \neq g \in G$, where $I$ is the identity,
\item $\R^n = \cup\{ g \overline{F} \mid g \in G \}$.
\end{enumerate}
We choose the orientation of $\eta_\gamma$ so that $F \subset \mathcal{P}_{\gamma}^{+}$, where $\mathcal{P}_{\gamma}^{+} = \{ x\in\R^n \mid \langle x, \eta_\gamma \rangle > 0 \}$. Then, we have
\begin{equation}
F = \cap_{\gamma \in \Gamma} \mathcal{P}_{\gamma}^{+},
\end{equation}
where $\Gamma \subset G$ is the set of all reflections in $G$. Given $a \in G$, the {\em stabilizer} of $a$, denoted by $G_a$, is the subgroup of $G$ that fixes $a$.

\subsection{The theorem (\cite{alikakos-fusco})}
We begin with the hypotheses.
\begin{hypothesis}[$N$ nondegenerate global minima]\label{h1}
The potential $W$ is of class $C^2$ and satisfies $W(a_i)=0$, for $i=1,\ldots,N$, and $W>0$ on $\R^n \setminus \{a_1,\dots a_N\}$. Furthermore, there holds $v^\top\partial^2 W(u)v \geq 2 c^2 |v|^2$, for $v\in\R^n$ and $|u-a_i| \leq \bar{q}$, for some $c$, $\bar{q} > 0$, and for $i=1,\ldots, N$.
\end{hypothesis}

\begin{hypothesis}[Symmetry]\label{h2}
The potential $W$ is invariant under a finite reflection group $G$ acting on $\R^n$ (Coxeter group), that is,
\begin{equation}\label{g-invariance}
W(gu) = W(u), \text{ for all } g \in G \text{ and } u \in \R^n.
\end{equation}
Moreover, we assume that there exists $M>0$ such that
$W(su) \geq W(u)$, for $s\geq 1$ and $|u|=M.$
\end{hypothesis}
We seek {\em equivariant} solutions of system \eqref{system}, that is, solutions satisfying
\begin{equation}\label{equivariance}
u(gx) = gu(x), \text{ for all } g \in G \text{ and } x \in \R^n.
\end{equation}

\begin{hypothesis}[Location and number of global minima]\label{h3}
Let $F \subset \R^n$ be a fundamental region of $G$. We assume that $\overline{F}$ (the closure of $F$) contains a single global minimum of $W,$ say $a_1$, and let $G_{a_1}$ be the subgroup of $G$ that leaves $a_1$ fixed. Then, as it follows by the invariance of $W$, the number of the minima
of $W$ is
\begin{equation}\label{enne}
N= \frac{|G|}{|G_{a_1}|}.
\end{equation}
\end{hypothesis}

\begin{hypothesis}[$Q$-monotonicity]\label{h4}
We restrict ourselves to potentials $W$ for which there is a continuous function $Q: \R^n \to \R$ that satisfies
\begin{equation}\label{q-smooth}
Q(u+a_1) = |u| + H(u),
\end{equation}
where $H:\R^n\to\R$ is a $C^2$ function such that $H(0) = 0$ and $H_u(0) = 0$, and
\begin{subequations}\label{q-list}
\begin{align}
&Q \text{ is convex,}\label{q-list-a}\\
&Q(g u)=Q(u), \text{ for } u\in \R^n,\ g\in G_{a_1},\label{q-list-b}\\
&Q(u + a_1) = |u| + H(u), \text{ in a neighborhood of } u=0,\label{q-list-c}\\
&Q(u) >0, \text{ on } \R^n \setminus \{ a_1 \}\label{q-list-d},
\end{align}
\end{subequations}
and, moreover,
\begin{equation}\label{q-monotonicity}
\left\langle Q_u(u), W_u(u) \right\rangle \geq 0, \text{ in } D\setminus \{ a_1 \},
\end{equation}
where we have set
\begin{equation}
D:= \mathrm{Int}\left( {\cup_{g\in G_{a_1}} g\overline{F}} \right).
\end{equation}
\end{hypothesis}

\begin{theorem}[\cite{alikakos-fusco}]\label{theorem1}
Under Hypotheses \ref{h1}--\ref{h4}, there exists an equivariant classical solution to system \eqref{system} such that
\begin{enumerate}
\item $|u(x)-a_1| \leq K \mathrm{e}^{-k d(x,\partial D)}$, for $x \in D$ and for positive constants $k$, $K,$ \medskip
\item $u(\overline{F}) \subset \overline{F}$ and $u(D) \subset D$.
\end{enumerate}
In particular, $u$ connects the $N=|G|/|G_{a_1}|$ global minima of $W$ in the sense that
\[ \lim_{\lambda \to +\infty} u(\lambda g \eta) = g a_1, \text{ for all } g \in G,\]
uniformly for $\eta$ in compact subsets of $D\cap\SF^{n-1}.$
\end{theorem}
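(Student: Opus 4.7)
The plan is to construct $u$ as the limit of minimizers of $J$ on enlarging balls, and to derive the pointwise estimates from the $Q$-monotonicity via a Kato-type inequality, the De Giorgi oscillation lemma, and comparison with exponential barriers. For each $R>0$, I would minimize $J$ over the class of equivariant maps in $W_{\mathrm{E}}^{1,2}(B_R;\R^n)$ subject only to the pointwise bound $|u|\leq M$. The direct method yields a minimizer $u_R$; by Hypothesis~\ref{h2} this $L^\infty$ bound is not a binding constraint, so $u_R$ is a smooth equivariant solution of \eqref{system} in $B_R$. The role of nontrivial ``boundary data'' is played by equivariance itself: since $a_1$ is not fixed by all of $G$, the constant $a_1$ is not admissible, and an explicit competitor agreeing with $g a_1$ on $g\overline{F}$ away from a fixed-width transition layer along the walls $\pi_\gamma$ gives the \emph{a priori} upper bound $J(u_R)\leq C R^{n-1}$.

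I would next establish statement (ii) by a projection argument. Composing $u_R|_{\overline{F}}$ with the nearest-point projection onto the closed convex cone $\overline{F}=\cap_{\gamma\in\Gamma}\overline{\mathcal{P}_\gamma^+}$, and extending by $G$-equivariance, produces an admissible competitor whose Dirichlet energy is not larger (projection onto a convex set is $1$-Lipschitz) and whose potential energy is not larger (by $G$-invariance of $W$ and Hypothesis~\ref{h2}), with strict decrease unless $u_R(\overline{F})\subset\overline{F}$ already. An analogous argument based on the subgroup $G_{a_1}$ yields $u_R(D)\subset D$.

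The analytic core is the subharmonicity of $Q(u_R(\cdot))$ on $D\cap B_R$. Formally,
\[ \Delta Q(u_R) = \tr\bigl(\partial^2 Q(u_R)\,(\nabla u_R)(\nabla u_R)^\top\bigr) + \langle Q_u(u_R), W_u(u_R)\rangle \geq 0, \]
the first term being nonnegative by the convexity \eqref{q-list-a} and the second by the $Q$-monotonicity \eqref{q-monotonicity}; since $Q$ is only convex, I would make this rigorous as a distributional inequality via a Kato-type mollification of $Q$. The local identity $Q(u+a_1)=|u|+H(u)$ with $H=O(|u|^2)$, together with Hypothesis~\ref{h1}, then upgrades this to a linear Kato inequality
\[ \Delta |u_R-a_1| \geq c^2 |u_R-a_1| \quad \text{on the set where } |u_R-a_1|\leq \bar{q}, \]
from which comparison with an exponential barrier of the form $K\mathrm{e}^{-k d(x,\partial D)}$ yields estimate~(i), with constants $k,K$ independent of $R$.

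The principal obstacle is \emph{initiating} the decay: the barrier argument demands a preliminary pointwise bound $|u_R-a_1|\leq\bar{q}$ on a sufficiently large interior region of $D$, and it is precisely this step that \cite{alikakos-fusco} achieved via a pointwise constraint in the minimization. The new ingredient is De Giorgi's oscillation lemma applied to the nonnegative subharmonic function $Q(u_R)$: combined with the $L^2$ control derived from the energy bound $J(u_R)\leq CR^{n-1}$ and the $L^\infty$ bound $|u_R|\leq M$, it forces the required pointwise smallness on interior subregions of $D$ without any constraint in the minimization. Once estimate~(i) holds with $R$-independent constants, local $C^{2,\alpha}$ compactness allows the passage $R\to\infty$ to a global equivariant classical solution $u$; the connection property is then an immediate consequence of~(i) evaluated along the rays $\lambda g\eta$.
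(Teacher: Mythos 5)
Your overall architecture (minimize on $B_R$, obtain $J_{B_R}(u_R)\le CR^{n-1}$ from an explicit competitor, prove $\Delta Q(u_R)\ge 0$ via a Kato-type inequality, use De Giorgi's oscillation lemma to seed the estimate $Q(u_R)\le\bar q$ on an interior ball, propagate it with the radial comparison functions, and pass to the limit) is exactly the paper's. The genuine gap is in your treatment of the positivity property (ii), and it is not a peripheral step: the $Q$-monotonicity \eqref{q-monotonicity} is only assumed on $D$, and the lower bound $W(u_{4R})\ge\varepsilon_0(\bar q)>0$ on the set where $Q(u_{4R})\ge\bar q/2$ uses that $a_1$ is the unique zero of $W$ in $\overline D$; so the entire De Giorgi/barrier machinery is unavailable until you know $u_R(\overline{F_R})\subset\overline F$. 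You propose to obtain this a posteriori from an \emph{unconstrained} minimization by composing $u_R|_{\overline{F_R}}$ with the nearest-point projection $P_{\overline F}$ onto the convex cone $\overline F$ and extending equivariantly. The Dirichlet term indeed does not increase, and the equivariant extension is well defined (this requires $P_{\overline F}(v)\in\pi_\gamma$ whenever $v\in\pi_\gamma$ for a wall $\gamma$ of $F$, which holds because the inward normals of a fundamental chamber meet at obtuse angles). But the claim that the potential term does not increase ``by $G$-invariance'' is false: invariance compares $W$ at points of the same $G$-orbit, whereas $P_{\overline F}(v)$ is in general not in the orbit of $v$. Already for $n=1$, $G=\{\pm 1\}$, $\overline F=[0,\infty)$ and a symmetric double well, the projection sends the minimum $-a_1$ to $0$ and strictly increases $W$. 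So the projected competitor may have strictly larger energy and no contradiction with minimality is obtained.

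The paper handles this point differently, and it is the one essential structural ingredient missing from your proposal: the constraint $u(\overline{F_R})\subset\overline F$ is \emph{kept} inside the admissible class $A^R$ (what is dispensed with, relative to \cite{alikakos-fusco}, is only the additional pointwise constraint forcing $u$ to stay near $a_1$ in the interior), and the Euler--Lagrange equation is then recovered by running the gradient flow \eqref{evolution-problem} from the constrained minimizer. Theorem \ref{theorem2} shows, via the linear parabolic equation for $\zeta=\langle u,\eta_\gamma\rangle$, the maximum principle and the Hopf lemma, that the flow preserves $\mathcal{U}^{\mathrm{Pos}}$; since $J_{B_R}$ is nonincreasing along the flow and $u_R$ is a minimizer over a flow-invariant set, $u_R$ is an equilibrium, hence a solution of \eqref{system} satisfying (ii). You need either this argument or a correct substitute for it; as written, both (ii) and, downstream, the smallness of $Q(u_{4R})$ and estimate (i) are unproven.
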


\section{The extended Kato inequality}
We begin by presenting a straightforward extension of the classical Kato inequality. We follow the presentation in \cite[p.\ 85]{hislop-sigal}. Let $\hat{Q}: \R^m \to \R$ be a continuous function satisfying the following assumptions.
\begin{enumerate}
\item $\hat{Q}$ is convex,
\item $\hat{Q} > 0$ and $\hat{Q}_u \neq 0$, for $u \neq 0$,
\item $\hat{Q} = |u| + H(u)$, for a $C^2$ function $H: \R^m \to \R$, such that $H(0) = 0$ and $H_u(0) = 0$.
\end{enumerate}

\begin{lemma}\label{lemma1}
Let $u \in L^{\infty} (\R^n; \R^m)$ and suppose\footnote{The fact that $u$ should be in $u \in L^{\infty} (\R^n; \R^m)$ was pointed out to us by Panagiotis Smyrnelis. If $H$ is assumed globally Lipschitz, then $u \in L^{1}_{\mathrm{loc}} (\R^n; \R^m)$ suffices.} that the distributional Laplacian $\Delta u \in L^{1}_{\mathrm{loc}} (\R^n; \R^m)$. Then,
\begin{equation}\label{kato}
\Delta \hat{Q}(u) \geq \langle \Delta u , \hat{Q}_u (u) \rangle,
\end{equation}
in the distributional sense, with the definition
\[ \hat{Q}_u (u) := 
\begin{cases}
\nabla_{\! u} \hat{Q}(u), &\text{for } u \neq 0,\smallskip\\
0, &\text{for } u=0.
\end{cases} \]
\end{lemma}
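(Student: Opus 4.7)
The plan is a double mollification argument. First I would smooth $\hat{Q}$ by convolution on $\R^m$ with a radially symmetric kernel, $\hat{Q}_\delta := \hat{Q} * \eta_\delta$. Then $\hat{Q}_\delta$ is $C^\infty$, and convexity is preserved under convolution with a non-negative kernel, so $\hat{Q}_\delta$ is convex. Independently, I would mollify $u$ on $\R^n$ to obtain $u_\epsilon = u * \rho_\epsilon \in C^\infty$, bounded by $\|u\|_\infty$, with $u_\epsilon \to u$ almost everywhere and $\Delta u_\epsilon = (\Delta u) * \rho_\epsilon \to \Delta u$ in $L^1_{\mathrm{loc}}$.

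For the smooth composition $\hat{Q}_\delta \circ u_\epsilon$, the chain rule yields
$$\Delta \hat{Q}_\delta(u_\epsilon) = (\hat{Q}_\delta)_u(u_\epsilon) \cdot \Delta u_\epsilon + \sum_{i=1}^n (\partial_i u_\epsilon)^\top D^2 \hat{Q}_\delta(u_\epsilon)\, \partial_i u_\epsilon,$$
and the quadratic form on the right is non-negative by convexity, giving the pointwise inequality $\Delta \hat{Q}_\delta(u_\epsilon) \geq (\hat{Q}_\delta)_u(u_\epsilon) \cdot \Delta u_\epsilon$. Testing against a non-negative $\phi \in C_c^\infty(\R^n)$ transfers this to distributional form.

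Next I would pass $\epsilon \to 0$ with $\delta > 0$ fixed. Because $\hat{Q}_\delta$ and $(\hat{Q}_\delta)_u$ are continuous and locally bounded, and $\Delta u_\epsilon \to \Delta u$ in $L^1_{\mathrm{loc}}$, dominated and Vitali convergence yield the distributional inequality $\Delta \hat{Q}_\delta(u) \geq (\hat{Q}_\delta)_u(u) \cdot \Delta u$. Then I would pass $\delta \to 0$: the left side converges because $\hat{Q}_\delta \to \hat{Q}$ uniformly on compact subsets of $\R^m$; for the right side, $(\hat{Q}_\delta)_u(y) \to \hat{Q}_u(y)$ whenever $y \neq 0$ (since $\hat{Q} \in C^1$ there by (iii)), and at $y = 0$
$$(\hat{Q}_\delta)_u(0) = \int_{\R^m} \left( \frac{-z}{|z|} + H_u(-z) \right) \eta_\delta(z)\, \dd z,$$
where the radial symmetry of $\eta_\delta$ annihilates the first term and the second tends to $H_u(0) = 0$. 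Combined with a $\delta$-independent bound on $|(\hat{Q}_\delta)_u|$ over any bounded set (a consequence of the local Lipschitz constant of $\hat{Q}$), dominated convergence completes the passage.

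The main technical point I expect is precisely this pointwise limit $(\hat{Q}_\delta)_u(0) \to 0$: it justifies the convention $\hat{Q}_u(0) := 0$ in the statement and is exactly why the hypotheses $H(0) = 0$ and $H_u(0) = 0$ are imposed. The order of the limits is also essential; performing them in the reverse order would force evaluating the discontinuous limit $\hat{Q}_u$ along sequences $u_\epsilon(x) \to 0$ on $\{u = 0\}$, reintroducing a zero-set pathology that would require a separate Stampacchia-type argument ($\Delta u = 0$ almost everywhere on $\{u = 0\}$, through $W^{2,1}_{\mathrm{loc}}$-regularity of $u$), which the present ordering neatly avoids.
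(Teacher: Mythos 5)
Your proof is correct, and its architecture coincides with the paper's: a double regularization (smooth $\hat{Q}$, mollify $u$), convexity to discard the Hessian term in the chain rule, the $L^{\infty}$ bound on $u$ to provide domination, removal of the $u$-mollification first and the $\hat{Q}$-smoothing last, and the hypotheses $H(0)=0$, $H_u(0)=0$ to handle the origin. The one genuine difference is the smoothing device for $\hat{Q}$: you convolve with a radial kernel on $\R^m$, whereas the paper uses the classical Kato regularization $\hat{Q}_{\varepsilon}=\sqrt{\hat{Q}^2+\varepsilon^2}$. In the paper's version the gradient of the regularized function is $\hat{Q}\,\hat{Q}_u/\hat{Q}_{\varepsilon}$, so the convention $\hat{Q}_u(0):=0$ is recovered automatically because the numerator $\tfrac{\partial}{\partial u}(\tfrac12\hat{Q}^2)$ is everywhere differentiable and vanishes at the origin (this is where $H(0)=0$ is used); in your version the same convention is recovered because radial symmetry of the kernel annihilates the average of $z/|z|$ and $H_u(0)=0$ kills the remainder. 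Your route is marginally more flexible (it only needs the averaged gradient of $\hat{Q}$ near its non-smooth point to tend to $0$), while the paper's exploits the specific structure $\hat{Q}=|u|+H$ through $\hat{Q}^2$; your closing observation about why the limits cannot be interchanged is exactly the consideration that dictates the common ordering in both proofs.
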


\begin{remarks}
The well-known Kato inequality for functions $u \in L^{1}_{\mathrm{loc}} (\R^n; \mathbb{C})$ states that 
\begin{equation}
\Delta |u| \geq \mathrm{Re} [(\sgn u) \Delta u],
\end{equation}
in the distributional sense. The choice $|u| = \sqrt{u \bar{u}}$ and
\[ \sgn u = 
\begin{cases}
0, &\text{for } u=0,\smallskip\\
{\bar{u}}/{|u|}, &\text{for } u \neq 0,
\end{cases} \]
is a special case, for $\hat{Q}(u) = |u|$ and $\mathrm{Re} [u \bar{v}] = \langle u,v \rangle$. 

Also, under the hypothesis $u \in W^{1,2}_{\mathrm{loc}} (\R^n; \R^m) \cap L^{\infty} (\R^n; \R^m)$ we note that $\hat{Q}(u(\cdot)) \in W^{1,2}_{\mathrm{loc}} (\R^n; \R^m)$ (see \cite[p,\ 54]{kinderlehrer-stampacchia} or \cite[p.\ 130]{evans-gariepy}). Therefore, \eqref{kato} holds in $W^{1,2}_{\mathrm{loc}} (\R^n; \R^m)$.
\end{remarks}

\begin{proof}
We utilize the summation convention. We first establish
\begin{equation}\label{kato1}
\Delta \hat{Q} (u) \geq \langle \Delta u, \hat{Q}_u (u) \rangle,
\end{equation}
for $u \in C^{\infty}$, except where $\hat{Q} (u)$ is not differentiable. Set 
\[ \hat{Q}_{\varepsilon} (u(x)) = \sqrt{\hat{Q}^2 (u(x)) + \varepsilon^2}, \text{ for } \varepsilon > 0; \]
then,
\begin{equation}
\hat{Q}_{\varepsilon}\, \hat{Q}_{\varepsilon, i} = \hat{Q}\, \hat{Q}_{,u_k} u_{k,i},\text{ where } \hat{Q}_{\varepsilon, i} := \frac{\partial}{\partial x_i} \hat{Q}_{\varepsilon} (u(x)),
\end{equation}
and
\begin{equation}
(\hat{Q}_{\varepsilon, i})^2 = \left( \frac{\hat{Q}}{\hat{Q}_{\varepsilon}} \right)^2 (\hat{Q}_{,u_k} u_{k,i})^2 \leq (\hat{Q}_{,u_k} u_{k,i})^2.
\end{equation}
Hence,
\begin{equation}
\hat{Q}_{\varepsilon, i} \, \hat{Q}_{\varepsilon, i} \leq (\hat{Q}_{,u_k} u_{k,i}) (\hat{Q}_{,u_k} u_{k,i}),
\end{equation}
therefore
\begin{equation}\label{gradient1}
| \nabla_{\! x} \hat{Q}_{\varepsilon} |^2 \leq | (\nabla_{\! x} u)^\top \hat{Q}_u |^2.
\end{equation}
Moreover,
\begin{equation}
(\hat{Q}_{\varepsilon}\, \hat{Q}_{\varepsilon, i})_{,i} = \langle \Delta u, \hat{Q}\, \hat{Q}_u \rangle + \hat{Q} \langle (\partial^2 \hat{Q}) u_{,i}, u_{,i} \rangle + | (\nabla_{\! x} u)^\top \hat{Q}_u |^2,
\end{equation}
where $\partial^2 \hat{Q}$ is the Hessian of $\hat{Q}$ and $u_{,i} = (u_{1,i}, \dots ,u_{m,i})$. By convexity it follows that
\[ | \nabla_{\! x} \hat{Q}_{\varepsilon} |^2 + \hat{Q}_{\varepsilon}\, \Delta \hat{Q}_{\varepsilon} \geq \langle \Delta u, \hat{Q}\, \hat{Q}_u \rangle + | (\nabla_{\! x} u)^\top \hat{Q}_u |^2, \]
from which, by \eqref{gradient1}, 
\begin{equation}\label{laplacian1}
\Delta \hat{Q}_{\varepsilon} \geq \left\langle \Delta u, \frac{\hat{Q}\, \hat{Q}_u}{\hat{Q}_{\varepsilon}} \right\rangle.
\end{equation}
At points of smoothness we can take the limit $\varepsilon \to 0$ and obtain \eqref{kato1}.

We proceed by mollification. Let $w \in C^\infty (\R^n)$, with $w \geq0 $ and $\int w(x) \,\dd x = 1$. For $\delta > 0$ we define $w_\delta (x) = \delta^{-n} w(\delta^{-1} x)$ and set 
\[ I_\delta u := w_\delta * u, \text{ for } \delta > 0. \]
Then, $I_\delta u \to u$ and $\Delta (I_\delta u) \to \Delta u$ in $L^1$, as $\delta \to 0$. Applying \eqref{laplacian1} to $I_\delta u$ we have
\begin{equation}\label{fraction}
\Delta \hat{Q}_{\varepsilon}(I_\delta u) \geq \left\langle \Delta (I_\delta u), \frac{\frac{\partial}{\partial u}(\frac{1}{2}\hat{Q}^2)(I_\delta u)}{\hat{Q}_{\varepsilon}(I_\delta u)} \right\rangle.
\end{equation}
Taking $\delta \to 0$ and utilizing that $\hat{Q}^2$ is everywhere differentiable and that the fraction inside the inner product in \eqref{fraction} is bounded ($L^\infty$ requirement for $u(\cdot)$), by the dominated convergence theorem we have
\begin{equation}
\Delta \hat{Q}_{\varepsilon}(u) \geq \left\langle \Delta u, \frac{\frac{\partial}{\partial u}(\frac{1}{2}\hat{Q}^2)(u)}{\hat{Q}_{\varepsilon}(u)} \right\rangle.
\end{equation}
Finally, we pass to the limit in $\mathcal{D}^{\prime}$ as $\varepsilon \to 0$.
\end{proof}

\section{The gradient flow and positivity (\cite{alikakos-fusco})}
We define the set of {\em positive maps} (in the class of equivariant Sobolev maps)
\begin{equation}\label{pos}
\mathcal{U}^\mathrm{Pos}:=\big\{ u \in W^{1,2}_{\mathrm{E}}(B_R;\R^n) \mid u ( \overline{F_R} ) \subset \overline{F} \big\}
\end{equation}
and the set of {\em strongly positive maps}
\begin{equation}\label{strongly-pos}
\mathcal{U}^{\mathrm{Pos}}_{0}:=\big\{ u \in W^{1,2}_{\mathrm{E}}(B_R;\R^n) \mid u (F_R) \subset F \big\},
\end{equation}
where $F_R = F \cap B_R$. Here $R>0$ and clearly the sets $\mathcal{U}^\mathrm{Pos}$ and $\mathcal{U}^{\mathrm{Pos}}_{0}$ depend on $R$.

We will utilize the gradient flow
\begin{equation}\label{evolution-problem}
\begin{cases}
\dfrac{\partial u}{\partial t} = \Delta u - W_u(u), &\text{in } B_R \times (0,\infty),\bigskip\\
\dfrac{\partial u}{\partial \n} = 0, &\text{on } \partial B_R \times (0,\infty),,\bigskip\\
u(x,0) = u_0(x), &\text{in } B_R,
\end{cases}
\end{equation}
where ${\partial}/{\partial \n}$ is the normal derivative. We note that by Hypothesis \ref{h2}
\begin{equation}\label{evo1}
\langle -W_u(u), u \rangle \leq 0, \text{ for } |u| = M.
\end{equation}
We will consider initial conditions in \eqref{evolution-problem} satisfying in addition
\begin{equation}\label{evo2}
\| u_0 \|_{L^{\infty} (B_R;\R^n)} \leq M.
\end{equation}
Since $W$ is $C^2$ (cf.\ Hypothesis \ref{h1}), the results in \cite[Ch.\ 3, \S3.3, \S 3.5]{henry} apply and provide a unique solution to \eqref{evolution-problem} in $C(0,\infty; W^{1,2}_{\mathrm{E}}(B_R;\R^n))$, which for $t>0$, as a function of $x$, is in $C^{2+\alpha} (\overline{B_R};\R^n))$, for some $0<\alpha<1$. Moreover, the solution satisfies the estimate
\begin{equation}\label{evo3}
\| u(\cdot, t) \|_{L^{\infty} (B_R;\R^n)} \leq M, \text{ for } t \geq 0.
\end{equation}
This follows from \eqref{evo1}, \eqref{evo2}, and by well-known invariance results \cite[Ch.\ 14, \S B]{smoller}.

\begin{theorem}[\cite{alikakos-fusco}]\label{theorem2}
Let $W$ be a $C^2$ potential satisfying Hypothesis \ref{h2}. If $u_0 \in \mathcal{U}^{\mathrm{Pos}}$ and  $\| u_0 \|_{L^{\infty} (B_R;\R^n)} \leq M$, then
\[ u(\cdot, t; u_0) \in \mathcal{U}^{\mathrm{Pos}}, \text{ for } t \geq 0,  \]
and, moreover,
\[ u(\cdot, t; u_0) \in \mathcal{U}^{\mathrm{Pos}}_{0}, \text{ for } t > 0, \text{ provided } u_0 (\overline{F_R}) \cap F \neq \varnothing. \]
\end{theorem}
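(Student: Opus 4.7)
My overall strategy is to introduce, for each reflection $\gamma\in\Gamma$, the scalar function $v_\gamma(x,t):=\langle u(x,t),\eta_\gamma\rangle$, derive a linear parabolic equation for it, and then invoke the weak and strong maximum principles on the half-ball $\mathcal{P}_\gamma^+\cap B_R$. Because $\overline{F}=\bigcap_{\gamma\in\Gamma}\overline{\mathcal{P}_\gamma^+}$, membership in $\mathcal{U}^{\mathrm{Pos}}$ is equivalent to $v_\gamma\geq 0$ on $\mathcal{P}_\gamma^+\cap\overline{B_R}$ for every $\gamma$, while membership in $\mathcal{U}^{\mathrm{Pos}}_{0}$ is equivalent to strict positivity of $v_\gamma$ on $F_R$.

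Two preparatory facts are needed. First, equivariance is preserved by the flow: since the equation in \eqref{evolution-problem}, the ball $B_R$, the Neumann condition, and $W$ itself are all $G$-invariant, $\tilde u(x,t):=g^{-1}u(gx,t)$ is again a solution with the same initial datum, and uniqueness forces $u(gx,t)=gu(x,t)$. In particular $u$ maps $\pi_\gamma\cap B_R$ into $\pi_\gamma$, so $v_\gamma\equiv 0$ on $\pi_\gamma\cap B_R$. Second, differentiating the identity $W(\gamma u)=W(u)$ gives $W_u(\gamma u)=\gamma W_u(u)$, hence $\langle W_u(u),\eta_\gamma\rangle=0$ whenever $u\in\pi_\gamma$. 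Combined with the $C^2$-regularity of $W$ and the $L^\infty$-bound \eqref{evo3}, a Taylor expansion in the $\eta_\gamma$-direction then yields
\[
\langle W_u(u(x,t)),\eta_\gamma\rangle=c_\gamma(x,t)\,v_\gamma(x,t),
\]
where $c_\gamma(x,t)=\int_0^1\langle\partial^2 W(u-(1-s)v_\gamma\eta_\gamma)\eta_\gamma,\eta_\gamma\rangle\,\dd s$ is uniformly bounded by $\sup_{|u|\leq M}\|\partial^2 W(u)\|$. Taking the inner product of the equation in \eqref{evolution-problem} with $\eta_\gamma$ produces the linear scalar equation
\[
\partial_t v_\gamma-\Delta v_\gamma+c_\gamma(x,t)\,v_\gamma=0\ \text{on }B_R\times(0,\infty),\qquad\partial v_\gamma/\partial\n=0\ \text{on }\partial B_R.
\]

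For part (i), given $x\in\mathcal{P}_\gamma^+\cap\overline{B_R}$, I write $x=gy$ with $g\overline{F}\subset\overline{\mathcal{P}_\gamma^+}$ and $y\in\overline{F_R}$; then equivariance and the hypothesis $u_0\in\mathcal{U}^{\mathrm{Pos}}$ give $u_0(x)=gu_0(y)\in g\overline{F}\subset\overline{\mathcal{P}_\gamma^+}$, i.e.\ $v_\gamma(\cdot,0)\geq 0$ on this half-ball. The weak parabolic maximum principle applied on $\mathcal{P}_\gamma^+\cap B_R$, with Dirichlet boundary $v_\gamma=0$ on $\pi_\gamma\cap\overline{B_R}$, Neumann boundary on $\partial B_R\cap\overline{\mathcal{P}_\gamma^+}$, and the standard substitution $\tilde v_\gamma=e^{-Kt}v_\gamma$ to absorb a possibly negative lower bound on $c_\gamma$, yields $v_\gamma(\cdot,t)\geq 0$ for all $t\geq 0$. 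Intersecting over $\gamma$ gives $u(\overline{F_R},t)\subset\overline{F}$.

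For part (ii), the assumption $u_0(\overline{F_R})\cap F\neq\varnothing$ provides a point $x_0\in\overline{F_R}$ with $v_\gamma(x_0,0)>0$ for every $\gamma\in\Gamma$. The strong maximum principle applied to the linear equation on $\mathcal{P}_\gamma^+\cap B_R$ rules out interior zeros of $v_\gamma$ for $t>0$, and Hopf's boundary-point lemma rules out zeros on the Neumann portion $\partial B_R\cap\mathcal{P}_\gamma^+$ (such a zero would force a strictly negative inward normal derivative, contradicting the homogeneous Neumann condition). Hence $v_\gamma>0$ on $(\mathcal{P}_\gamma^+\cap\overline{B_R})\setminus\pi_\gamma$ for every $t>0$, and intersecting over $\gamma$ gives $u(F_R,t)\subset F$. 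The main technical hurdle is the linearization identity $\langle W_u(u),\eta_\gamma\rangle=c_\gamma v_\gamma$, which uses essentially the reflection invariance of $W$, together with the combined strong maximum principle / Hopf lemma argument on a domain with mixed Dirichlet–Neumann boundary; the rest of the proof is bookkeeping over $\gamma\in\Gamma$.
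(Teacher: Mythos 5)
Your proposal is correct and follows essentially the same route as the paper: reduce positivity to the sign of $\langle u,\eta_\gamma\rangle$ on each half-ball via the identity $\overline{F}=\cap_\gamma\overline{\mathcal{P}_\gamma^+}$ and the fundamental-region decomposition, linearize $\langle W_u(u),\eta_\gamma\rangle$ through the reflection invariance of $W$ to get a bounded zero-order coefficient, and then apply the weak maximum principle for part (i) and the strong maximum principle plus Hopf lemma for part (ii). The only refinement the paper adds is a mollification of the $W^{1,2}$ initial datum $\zeta_0$ followed by continuous dependence, so that the classical maximum principle may be invoked rigorously.
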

\begin{proof}
Let $u : B_R \to \R^n$ be an equivariant map. We will prove that $u$ is a positive map if and only if
\begin{equation}\label{closure1}
u(\overline{(\mathcal{P}_\gamma^+)_R} ) \subset\overline{\mathcal{P}_\gamma^+}, \text{ for all } \gamma\in\Gamma,
\end{equation}
where $(\mathcal{P}_\gamma^+)_R  = \mathcal{P}_\gamma^+\cap B_R$. 

Suppose that \eqref{closure1} holds. Then
\[ u(\overline{F_R}) = u ( \cap_{\gamma\in\Gamma} \overline{(\mathcal{P}_\gamma^+)_R} ) \subset \cap_{\gamma\in\Gamma}\, u (\overline{(\mathcal{P}_\gamma^+)_R}) \subset \cap_{\gamma\in\Gamma}\,\overline{\mathcal{P}_\gamma^+} = \overline{F}.\]
Hence, $u$ is positive. Conversely, suppose that $u$ is a positive equivariant map on $B_R$. Then, equivalently, $u_{\mathrm{e}}$ defined by
\begin{equation}\label{u-e-def}
u_{\mathrm{e}}(x) := 
\begin{cases}
u(x), &\text{for } x \in B_R\smallskip\\
0,  &\text{for } x \in \R^n \setminus B_R
\end{cases}
\end{equation}
is a positive equivariant map on $\R^n$. For any $g \in G$, we have from equivariance and positivity,
\begin{equation}\label{u-e}
u_{\mathrm{e}} (g(\overline{F})) = g(u_{\mathrm{e}}(\overline{F})) \subset g(\overline{F}).
\end{equation}
Now pick a $\gamma\in\Gamma$ and take an $x \in
 \mathcal{P}_\gamma^+ $ and fix it. There is a $g \in G$, denoted by
$g_x$, such that $x \in g_x(\overline{F})$ and $g_x(F)$ is also a
fundamental region. Since for each fundamenal region $F^\prime$ and for each reflection $\gamma$ we have either $F^\prime \subset \mathcal{P}_\gamma^+$ or $F^\prime \subset-\mathcal{P}_\gamma^+$, we conclude that
\begin{equation}
g_x(\overline{F}) \subset \overline{\mathcal{P}_\gamma^+}.
\end{equation}
Thus, by \eqref{u-e}, $u_{\mathrm{e}} (\overline{\mathcal{P}_\gamma^+}) \subset \overline{\mathcal{P}_\gamma^+}$, and so \eqref{closure1} follows.

Now consider \eqref{evolution-problem} with  $u_0 \in \mathcal{U}^\mathrm{Pos}$. By the regularizing property of the equation the solution is classical for $t >0$ and by \eqref{evo1} it exists globally in time and belongs to $C(0,+\infty; W^{1,2}_{\mathrm{E}}(B_R; \R^n)) \cap C^1(0, +\infty; C^{2+\alpha}(\overline{B_R}; \R^n)$, for some $0< \alpha <1$ (see \cite{henry}). Consider a reflection $\gamma\in\Gamma$ and set 
\begin{align*}
\zeta(x,t)& = \langle u(x,t,u_0),\eta_\gamma \rangle, \text{ on } B_{R} \times (0,\infty),\\
\zeta_0(x)& = \langle u_0(x),\eta_\gamma \rangle, \text{ on } B_{R}.
\end{align*}
By taking the inner product of equation \eqref{evolution-problem} with $\eta_\gamma$, we obtain
\begin{equation}\label{phi-problem}
\begin{cases}
\dfrac{\partial \zeta}{\partial t} = \Delta \zeta + c\zeta, &\text{in } B_{R} \times (0,\infty),\bigskip\\
\dfrac{\partial \zeta}{\partial \n} =0, &\text{on } \partial B_{R} \times (0,\infty),\bigskip\\
\zeta(\cdot,0) = \zeta_0,
\end{cases}
\end{equation}
where we have set 
\[ c(x,t)=\dfrac{\langle W_u(u(x,t,u_0),\eta_\gamma\rangle}{\zeta(x,t)}.\]
From  the equivariance of $u(\cdot,t,u_0)$ and $W_u(\gamma u) = \gamma W_u(u)$ it follows that 
\begin{align}
\zeta(x,t) &= -\zeta(\gamma x,t), \text{ in } B_{R} \times (0,\infty),\label{zeta-invariance} \\
c(x,t) &= c(\gamma x,t), \text{ in } B_{R} \times (0,\infty) \label{c-invariance}.
\end{align} 
From the symmetry of $W$ we also have that $u \in \pi_\gamma$ implies  $W_u(u) \in \pi_\gamma$. From this we deduce
\begin{equation}
\langle W_u(u),\eta_\gamma \rangle = \langle u, \eta_\gamma \rangle \left\langle
\int_0^1W_{uu}\big(u + (s-1)\langle u, \eta_\gamma \rangle
\eta_\gamma\big) \eta_\gamma \,\dd s,\, \eta_\gamma \right\rangle.
\end{equation}
Thus, the coefficient $c(x,t)$ of $\zeta$ in \eqref{phi-problem} is bounded (actually continuous) on $B_{R}\times(0,\infty)$. Since $u_0$ is a positive map, we have $\zeta_0 \geq 0$ for $\langle x,\eta_\gamma \rangle\geq 0$.   Therefore, for establishing positivity it is sufficient to show that $\zeta(x,t) \geq 0$, for $x\in B_R^+ = \{ x \in B_R \mid \langle x,\eta_\gamma \rangle> 0 \} $ and $t \geq 0$. We note that by \eqref{zeta-invariance} there holds $\zeta(x,t) = 0$ for $x\in \pi_\gamma \times [0,\infty)$, hence if $\zeta$ is a classical solution of \eqref{phi-problem}, we have that $\zeta(x,t)$ is nonnegative on $B_{R}^{+} \times [0,\infty)$ by the maximum principle. For general $\zeta_0 \in W^{1,2} (B_R)$ we approximate via mollification as in \cite[\S 4.2, Thm.\ 2]{evans-gariepy} and note that positivity and symmetry are preserved by the approximation process, rendering $\zeta_{0}^{\varepsilon} \in C^{\infty} ({B_R}) \cap L^\infty(B_R)$, with $\zeta_{0}^{\varepsilon} \to \zeta_0$ in $W^{1,2} (B_R)$. By the classical maximum principle, there holds that $\zeta^{\varepsilon} (x,t) \geq 0$ on $B_{R}^{+} \times [0,\infty)$, and by continuous dependence for \eqref{phi-problem} in $W^{1,2} (B_R)$ \cite[Thm.\ 3.4.1]{henry}, we have that $\zeta^\varepsilon (\cdot, t) \to \zeta (\cdot, t)$ a.e.\ in $B_R$ along subsequences $\varepsilon_n \to 0$, hence $\zeta(x,t) \geq 0$ a.e. Finally, since $\zeta(x,t)=0$ for $x \in \pi_\gamma \times (0,\infty)$ and since $\zeta(\cdot,t) \in C^{2+\alpha}(\overline{B_R})$ for $t>0$, the Hopf boundary lemma applies on the smooth part of $\partial B_{R}^{+}$ and renders
\[
\zeta(x,t) > 0, \text{ in } B_{R}^{+} \times (0,\infty),
\]
unless $\zeta(x,t) \equiv 0$, hence unless $\zeta_0(x) \equiv 0$. But the hypothesis $u_0(\overline{F_R}) \cap F \neq \varnothing$ excludes this second option.
\end{proof}

\section{The minimization}
Let $A^R := \left\{ u \in W^{1,2}(B_R,\R^n) \mid u(\overline{F}_R) \subset \overline{F} \right\}$ and consider the minimization problem 
\[
\min_{A^R} J_{B_R}, \text{ where } J_{B_R}(u) = \int_{B_R} \left\{ \frac{1}{2} |\nabla u|^2 + W(u) \right\} \dd x.
\]

We will argue first that the minimizer exists. We redefine $W(u)$ for $|u| \geq M+1$, so that the modified $W$ is $C^2$, satisfies $W(u) \geq c^2 |u|^2$, for $|u| \geq M+1$ and a constant $c$, and also $W(gu) = W(u)$, for all $g \in G$. We still denote the modified potential by $W$ and the modified functional by $J_{B_R}$. We note that the convexity  of $\overline{F}$ implies that $A^R$ is convex and closed in $W_{\mathrm{E}}^{1,2}(B_R;\R^n)$. The modified functional $J_{B_R}$ satisfies all the properties required by the direct method and, as a result, a minimizer $v_R \in A^R$ exists.

Next we will show that as a consequence of Hypothesis \ref{h2} we can produce a minimizer $u_R \in A^R$, which in addition satisfies the estimate $|u_R (x)| \leq M$ (cf.\ (H3) in \cite{bronsard-gui-schatzman}). Due to this estimate, the values of $W$ outside $\{ |u| \leq M \}$ will not matter in the considerations in the rest of the paper and, therefore, the equation that will be solved is \eqref{system} with the original unmodified potential $W$. Set 
\begin{equation}
u_R (x) = P v_R (x),
\end{equation}
where $P v$ equals the projection on the sphere $\{ v \in \R^n \mid |v| = M \}$, for points outside the sphere ($Pv = M v / |v|$), and equals the identity inside the sphere. Since $P$ is a contraction with respect to the Euclidean norm in $R^n$, it follows that $u_R \in W^{1,2} (B_R; \R^n)$, with $|\nabla u_R (x)| \leq |\nabla v_R (x)|$. Furthermore, 
\[ u_R(gx) = P v_R (gx) = P g v_R(x) = g Pv_R (x) = g u_R (x), \]
hence $u_R \in W_{\mathrm{E}}^{1,2} (B_R; \R^n)$. Clearly $u_R (\overline{F}) \subset \overline{F}$ and $|u_R (x)| \leq M$, for $x \in B_R$. 

The fact that $u_R$ is also a minimizer is a consequence of Hypothesis \ref{h2} and the following calculation.
\begin{align*}
J_{B_R} (u) &\geq \int_{B_R} \left\{ \frac{1}{2} |\nabla v_R|^2 + W(v_R) \right\} \dd x, \text{ for } u \in A^R,\\
&\geq \int_{B_R} \left\{ \frac{1}{2} |\nabla u_R|^2 + W(v_R) \right\} \dd x\\
&= \int_{|v_R (x)| \leq M} \left\{ \frac{1}{2} |\nabla u_R|^2 + W(u_R) \right\} \dd x\\ 
&\qquad + \int_{|v_R (x)| > M} \left\{ \frac{1}{2} |\nabla u_R|^2 + W(v_R) \right\} \dd x\\
&\geq \int_{|v_R (x)| \leq M} \left\{ \frac{1}{2} |\nabla u_R|^2 + W(u_R) \right\} \dd x\\ 
&\qquad + \int_{|v_R (x)| > M} \left\{ \frac{1}{2} |\nabla u_R|^2 + W(M \frac{v_R}{|v_R|}) \right\} \dd x\\
&= \int_{B_R} \left\{ \frac{1}{2} |\nabla u_R|^2 + W(u_R) \right\} \dd x,
\end{align*}
where the last inequality follows from Hypothesis \ref{h2}.

We will be constructing the solution by taking the limit 
\begin{equation}
u(x) = \lim_{R \to \infty} u_R (x).
\end{equation}
For this purpose, we will need to show that the positivity constraint built in $A^R$ does not affect the Euler--Lagrange equation, and we also need certain estimates, uniform in $R$, which in particular will imply that the solution is nontrivial.

\begin{lemma}\label{lemma3}
Let $u_R$ be as above. Then, for $R>1$, the following hold.
\begin{enumerate}
\item $J_{B_R} (u_R) \leq C R^{n-1}$, $\| u_R \|_{L^{\infty}(B_R; \R^n)} \leq M$, and $Q(u_R(x)) \leq \overline{Q}$, where $\overline{Q} := \max_{|u| \leq M} Q(u)$, 
\item $\Delta u_R - W_u (u_R) = 0$, in $W_{\mathrm{loc}}^{1,2} (B_R; \R^n)$,
\item $u_R$ is positive (cf.\ \eqref{pos}),
\item $\Delta Q(u_R (x)) \geq 0$, in $W_{\mathrm{loc}}^{1,2} (D_R)$, where $D_R := D \cap B_R$ (cf.\ Hypothesis \ref{h4}).
\end{enumerate}
\end{lemma}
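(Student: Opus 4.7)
Parts (i) and (iii) are essentially routine. Positivity (iii) is nothing but the membership $u_R \in A^R$, which is built into the construction. In (i), the $L^\infty$ bound $\|u_R\|_{L^\infty} \leq M$ is built into the projection $P$, and $Q(u_R(x)) \leq \overline{Q}$ follows from continuity of $Q$ on the compact set $\{|u| \leq M\}$. For the energy bound $J_{B_R}(u_R) \leq CR^{n-1}$, I would construct an equivariant competitor $\tilde u \in A^R$ of the form $\tilde u(x) = \varphi\bigl(d(x, \partial F)\bigr)\, a_1$ on $F_R$, extended equivariantly, where $\varphi \in C^1([0,\infty), [0,1])$ satisfies $\varphi(0) = 0$ and $\varphi(s) = 1$ for $s \geq 1$. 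The vanishing of $\varphi$ on the reflecting walls makes the extension continuous, and since $\overline{F}$ is a convex cone through $0$ and $a_1$, the image lies in $\overline{F}$. The energy density of $\tilde u$ is uniformly bounded and supported in the strip $\{d(x, \partial F) \leq 1\} \cap B_R$ of volume $O(R^{n-1})$, so minimality gives the claim.

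Part (ii) is the crux of the new proof and, I expect, the main obstacle: one must extract an \emph{unconstrained} Euler--Lagrange equation despite the pointwise constraint $u(\overline{F_R}) \subset \overline{F}$, bypassing Lagrange multipliers. The plan is to let the gradient flow \eqref{evolution-problem} do the work, exactly as advertised in the introduction. Consider the orbit $u(\cdot, t; u_R)$. Since $u_R \in \mathcal{U}^{\mathrm{Pos}}$ with $\|u_R\|_\infty \leq M$, Theorem \ref{theorem2} keeps the orbit inside $\mathcal{U}^{\mathrm{Pos}} \subset A^R$ for all $t \geq 0$. The standard dissipation identity
\[
J_{B_R}(u_R) - J_{B_R}\bigl(u(\cdot, t; u_R)\bigr) = \int_0^t \!\int_{B_R} \left| \frac{\partial u}{\partial s} \right|^2 \dd x \, \dd s,
\]
combined with the minimality of $u_R$ over $A^R$, forces $\partial_t u \equiv 0$ along the orbit. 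Hence $\Delta u_R - W_u(u_R) = 0$ in the weak sense, and then classically by the parabolic regularization in Henry's framework.

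For part (iv), I would apply the extended Kato inequality, Lemma \ref{lemma1}, to the translated function $\hat{Q}(v) := Q(v + a_1)$. Its hypotheses are satisfied: convexity from \eqref{q-list-a}, the representation $\hat{Q}(v) = |v| + H(v)$ with $H(0) = H_u(0) = 0$ from \eqref{q-smooth}, positivity away from the origin from \eqref{q-list-d}, and $\hat{Q}_u \neq 0$ off the origin because any critical point of a convex function is a minimum while $\hat{Q}$ attains its unique minimum at $0$. Lemma \ref{lemma1} then yields the distributional inequality $\Delta Q(u_R) \geq \langle \Delta u_R, Q_u(u_R) \rangle$. Substituting the Euler--Lagrange equation from (ii) and invoking the $Q$-monotonicity \eqref{q-monotonicity}, while noting that equivariance under $G_{a_1}$ maps $D_R$ into $\overline{D}$, produces $\Delta Q(u_R) \geq \langle W_u(u_R), Q_u(u_R) \rangle \geq 0$ in $D_R$, as required.
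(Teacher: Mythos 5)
Your proposal is correct and follows essentially the same route as the paper: the same kind of equivariant competitor (equal to $a_1$ away from the walls, vanishing on them, with energy confined to an $O(R^{n-1})$ strip) for (i), and exactly the paper's gradient-flow/dissipation argument via Theorem \ref{theorem2} for (ii). The only (harmless) deviation is in (iv), where you apply Lemma \ref{lemma1} directly to the stationary $u_R$ --- legitimate, since by (i)--(ii) one has $u_R\in L^{\infty}$ with $\Delta u_R=W_u(u_R)\in L^{1}_{\mathrm{loc}}$ --- whereas the paper applies it to the flow $u(\cdot,t;u_R)$ for $t>0$ and then passes to the limit $t\to 0$ via weak $L^2$ convergence of $\nabla Q(u(\cdot,t))$; your shortcut is in fact the same computation the paper itself performs later in the proof of Lemma \ref{lemma11}.
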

\begin{proof}
For (i), define
\[
u_{\mathrm{aff}}(x) := 
\begin{cases}
d(x;\partial D) a_1, &\text{for } x \in D_{R} \text{ and } d(x;\partial D) \leq 1,\smallskip\\
a_1,  &\text{for } x \in D_{R} \text{ and } d(x;\partial D) \geq 1,
\end{cases} \]
and extend it equivariantly on $B_R$. Clearly, $u_{\mathrm{aff}} \in A^R$.
By the nonnegativity of $W$ and a simple calculation,
\begin{equation}
0 \leq J_{B_R} (u_R) \leq \min_{A^R} J_{B_R}(u) < J_{B_R} (u_\mathrm{aff}) < C R^{n-1},
\end{equation}
for some constant $C$ independent of $R$. The rest of (i) is already known.

For (ii), by Theorem \ref{theorem2}, we have $u(\cdot,t;u_R) \in A^R$, for $t\geq 0$. Since $u_R$ is a global minimizer of $J_{B_R}$ in $A^R$, and since $u(\cdot,t;u_R) \in C^1(0,\infty;C^{2+\alpha} (\overline{B_R}))$, a classical solution to \eqref{evolution-problem} for $t>0$, we conclude from
\begin{equation}\label{j-time}
\frac{\dd}{\dd t} J_{B_R} (u(\cdot,t)) = - \int_{B_R} |u_t|^2 \,\dd x
\end{equation} 
that $|u_t (x,t)|=0$, for all $x \in B_R$ and $t>0$. Hence, for $t>0$, $u(\cdot, t)$ is satisfying
\begin{equation}\label{u-satisfies}
\Delta u(x,t) - W_u (u(x,t)) = 0.
\end{equation}
By taking $t \to 0+$ and utilizing the continuity of the flow in $W^{1,2} (B_R;\R^n)$ at $t=0$, $u(\cdot,\cdot;u_R) \in C([0,\infty);W^{1,2}(B_R; \R^n))$, we obtain (ii).

Since (iii) is already known, we go on to (iv) where we obtain from \eqref{u-satisfies}, for $t>0$,
\begin{align*}
0 &= \langle Q_u(u(x,t)), \Delta u(x,t) \rangle - \langle Q_u(u(x,t)), W_u (u(x,t)) \rangle \\
&= \langle \hat{Q}_u (u(x,t) -a_1), \Delta( u(x,t) - a_1) \rangle - \langle Q_u(u(x,t)), W_u (u(x,t))
\end{align*}
where $Q(u) = \hat{Q}(u-a_1)$, while using \eqref{kato} we continue to obtain
\begin{align}
0 &\leq \Delta \hat{Q}(u(x,t) - a_1) - \langle Q_u(u(x,t)), W_u (u(x,t)) \rangle \nonumber\\
&= \Delta Q(u(x,t)) - \langle Q_u(u(x,t)), W_u (u(x,t)) \rangle \nonumber\\
&\leq \Delta Q(u(x,t)),
\end{align}
by Theorem \ref{theorem2}, utilizing $u_R \in \mathcal{U}^{\mathrm{Pos}}$, from which it follows that $u(\overline{D_R},t) \subset D$, and by Hypothesis \ref{h4}, particularly \eqref{q-monotonicity}.

Thus, by the second remark following Lemma \ref{lemma1}, we have
\begin{equation}
\Delta Q(u(x,t)) \geq 0, \text{ in } W^{1,2}_{\mathrm{loc}} (D_R), \text{ for } t>0,
\end{equation}
or,  equivalently,
\begin{equation}
\int_{D_R} \nabla Q(u(x,t)) \nabla \phi(x) \,\dd x \leq 0, \text{ for all } \phi \geq 0,\, \phi \in W^{1,2}_{\mathrm{loc}} (D_R).
\end{equation}
We will argue that 
\begin{equation}\label{nabla-conv}
\nabla Q(u(\cdot,t)) \to \nabla Q(u_R(\cdot)), \text{ weakly in } L^2(B_R), \text{ as } t \to 0,
\end{equation}
via which the proof of (iv) will be concluded. We know that 
\begin{equation}\label{u-conv}
\begin{cases}
u(\cdot,t;u_R) \to u_R, \text{in } W^{1,2}(B_R;\R^n), \text{ as } t \to 0,\smallskip\\
\| u(\cdot,t;u_R) \|_{L^\infty (B_R;\R^n)} \leq M.
\end{cases}
\end{equation}
Hence,
\[ Q(u(\cdot,t;u_R)) \to Q(u_R), \text{ in } L^2(B_R), \text{ as } t \to 0, \]
since $Q_u$ can be taken globally bounded. Thus,
\[ \nabla Q(u(\cdot,t;u_R)) \to \nabla Q(u_R), \text{ in } \mathcal{D}^{\prime}(B_R), \text{ as } t \to 0. \]
However, $\| \nabla Q(u(\cdot,t;u_R)) \|_{L^2(B_R)} < C$ by \eqref{u-conv}. Therefore \eqref{nabla-conv} is established and the proof is complete.
\end{proof}

The consideration in Lemma \ref{lemma3}, particularly \eqref{j-time}, together with the fact that $u_R$ is a global minimizer, show that $u(\cdot,t;u_R)$ is an equilibrium of \eqref{evolution-problem} for $t>0$, that is, a time-independent solution satisfying in addition the boundary condition ${\partial u}/{\partial \n}=0$. We can therefore replace $u_R$ with this equilibrium which satisfies all the properties of Lemma \ref{lemma3} and also is in $C^{2+\alpha} (\overline{B_R};\R^n)$.

\begin{corollary}\label{coro}
We may assume that $u_R \in C^{2+\alpha} (\overline{B_R};\R^n)$ is an equilibrium of \eqref{evolution-problem} that satisfies all the properties of Lemma \ref{lemma3}. Then,
\[ u_R(\overline{F_R}) \cap \overline{F} \neq \varnothing \quad\text{implies}\quad u_R \in \mathcal{U}^{\mathrm{Pos}}_{0}. \]
\end{corollary}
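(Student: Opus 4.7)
The plan is to apply Theorem \ref{theorem2} directly to the initial datum $u_0 = u_R$ and then to transfer the conclusion back to $u_R$ itself via the equilibrium property. Reading the intersection in the hypothesis as $u_R(\overline{F_R}) \cap F \neq \varnothing$ (i.e., with the open fundamental region, as in Theorem \ref{theorem2}—the closed-$\overline{F}$ reading would be trivial since positivity already gives $u_R(\overline{F_R}) \subset \overline{F}$), all admissibility requirements on the initial data match up: Lemma \ref{lemma3}(iii) gives $u_R \in \mathcal{U}^{\mathrm{Pos}}$, and Lemma \ref{lemma3}(i) gives $\| u_R \|_{L^\infty(B_R;\R^n)} \leq M$.

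With these hypotheses verified, Theorem \ref{theorem2} yields $u(\cdot, t; u_R) \in \mathcal{U}^{\mathrm{Pos}}_{0}$ for every $t > 0$. The paragraph immediately preceding the corollary records that $u_R$ is in fact a stationary solution of \eqref{evolution-problem}: the energy identity \eqref{j-time} forces $t \mapsto J_{B_R}(u(\cdot, t; u_R))$ to be nonincreasing, but $u(\cdot, t; u_R) \in A^R$ (by positivity and the $L^\infty$ bound) and $u_R$ is a global minimizer of $J_{B_R}$ over $A^R$, so the energy is constant in $t$ and hence $u_t \equiv 0$. Therefore $u(\cdot, t; u_R) = u_R$ for all $t \geq 0$, and for any $t > 0$ the conclusion of Theorem \ref{theorem2} reads exactly $u_R \in \mathcal{U}^{\mathrm{Pos}}_{0}$.

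The proof is essentially a one-line composition of Theorem \ref{theorem2} with the equilibrium identity, and I do not anticipate any genuine obstacle. The one conceptual point worth flagging is that Theorem \ref{theorem2} produces strong positivity only for $t > 0$ (it relies on the Hopf boundary lemma applied to the classical, parabolically regularized solution), so in the absence of the equilibrium identity one could not say anything about the initial datum $u_R$ at $t = 0$; stationarity is precisely what bridges this gap, making the $t > 0$ statement equivalent to a statement about $u_R$ itself.
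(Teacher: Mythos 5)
Your proof is correct and takes essentially the same route as the paper, which likewise deduces the corollary in one line from Theorem \ref{theorem2} combined with the fact that $u_R$ is a time-independent solution of \eqref{evolution-problem}, so that the $t>0$ conclusion of Theorem \ref{theorem2} applies to $u_R$ itself. Your remark that the intersection in the hypothesis should be read with the open region $F$ (as in Theorem \ref{theorem2}), the closed reading being trivially satisfied by positivity, is a sensible clarification.
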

\begin{proof}
This follows from Theorem \ref{theorem2} and the fact that $u_R$ is a time-independent solution of \eqref{evolution-problem}.
\end{proof}

\section{The comparison function $\sigma$ (\cite{alikakos-fusco})}\label{section-comparison}
We prove three lemmas leading to the construction of a map $\sigma$ that will play a major role in the derivation of the uniform estimates in $R$ in the following section. We let $\chi_A$ be the characteristic function of a set $A$.
 
Given numbers $l, \lambda>0$, set $L=l+\lambda$ and let
$\varphi=\chi_{\overline{B_l}}\varphi_1+\chi_{\overline{B_L}\setminus\overline{B_l}}\varphi_2$,
where $\varphi_1:\overline{B_l}\to\R$, $\varphi_2:\overline{B_L}\setminus
B_l\to\R$ are defined by
\begin{equation}\label{final-comp}
\begin{cases}
\Delta \varphi_1 = c^2\varphi_1, &\text{in } B_l,\smallskip\\
\varphi_1 = \bar q, &\text{on } \partial B_l,
\end{cases}
\end{equation}
and
\begin{equation}\label{quasifinal-comp}
\begin{cases}
\Delta \varphi_2 = 0, &\text{in } B_L\setminus\overline{B_l},\smallskip\\
\varphi_2 = \bar q, &\text{on } \partial B_l,\smallskip\\
\varphi_2 = \overline{Q}, &\text{on } \partial B_L,
\end{cases}
\end{equation}
where $c$, $\bar q$, and $M$ below are the constants defined in Hypotheses \ref{h1} and \ref{h2} and
\begin{equation}\label{quasifinal-comp5}
\overline{Q} = \max_{\vert u\vert\leq M}Q(u),
\end{equation}
(see Hypothesis \ref{h2} and (i) of Lemma \ref{lemma3}). The map $\varphi$ is radial, that is, $\varphi_j(x)=\phi_j(\vert x \vert)$, for $j=1,2.$ Classical properties of Bessel functions imply that  $\phi_1 : [0,l] \to \R$ is positive and increasing together with the first derivative $\phi_1^\prime$. The function $\phi_2:[l,L]\to\R$ is increasing with decreasing first derivative $\phi_2^\prime$, by explicit calculation.

\begin{lemma}\label{lemma4}
The following hold.
\begin{enumerate}
\item The function $\phi_1^\prime(l)$ is strictly increasing for $l\in(0,+\infty)$ and
\begin{equation}\label{phi1-limit}
      \lim_{l\to+\infty}\phi_1^\prime(l) = c\bar q.
    \end{equation}
\item There exists a strictly increasing function $h:(0,+\infty)\to(0,+\infty)$ such that
\begin{equation}\label{ex-bound}
\phi_1(r) \leq \mathrm{e}^{h(l)(r-l)}\phi_1(l),\text{ for } r\in[0,l],
\end{equation}
and $\lim_{l\to+\infty} h(l) = c$.
\item There is a constant $C_0$, independent of $l$, such that
\begin{equation}
\phi_1^{\prime\prime}(r) \leq  C_0,\text{ for } r\in[0,l].
\end{equation}
\end{enumerate}
\end{lemma}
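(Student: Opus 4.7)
My plan is to pass to radial coordinates and reduce all three claims to properties of the logarithmic derivative of the fundamental solution. The function $\phi_1$ on $[0,l]$ satisfies the modified Bessel ODE $\phi_1'' + \frac{n-1}{r}\phi_1' = c^2 \phi_1$, with $\phi_1(l) = \bar q$ and the regularity condition $\phi_1'(0) = 0$. Let $\psi : [0,\infty) \to \R$ be the unique solution of this ODE on the half-line with $\psi(0) = 1$, $\psi'(0) = 0$; a power-series computation gives $\psi(r) = 1 + \frac{c^2}{2n}r^2 + O(r^4)$. Then $\phi_1(r) = \bar q\, \psi(r)/\psi(l)$, and I define $v(r) := \psi'(r)/\psi(r)$, which obeys the Riccati equation
\begin{equation*}
v' = c^2 - \frac{n-1}{r} v - v^2, \qquad v(0^+) = 0, \qquad v'(0^+) = c^2/n.
\end{equation*}

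Since $\phi_1'(l) = \bar q\, v(l)$, claim (i) reduces to showing that $v$ is strictly increasing on $(0,\infty)$ with $\lim_{r\to\infty} v(r) = c$. If $r_0 > 0$ were the smallest zero of $v'$, then $v$ would be strictly increasing on $(0, r_0)$, hence $v(r_0) > 0$. Differentiating the Riccati equation and inserting $v'(r_0) = 0$ yields $v''(r_0) = \frac{n-1}{r_0^2} v(r_0) > 0$, which forces $r_0$ to be a strict local minimum of $v$ --- contradicting that $v$ is increasing just before $r_0$. So $v' > 0$ throughout, which by the Riccati relation forces $v < c$. Consequently $v$ admits a limit $L \leq c$, and $L < c$ is excluded because then $v'(r) \to c^2 - L^2 > 0$, contradicting boundedness of $v$. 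Hence $v(r) \uparrow c$.

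For (ii), I choose $h(l) := \frac{1}{l} \log \psi(l) = \frac{1}{l} \int_0^l v(s)\,\dd s$. Because $\phi_1(r)/\phi_1(l) = \psi(r)/\psi(l)$, the desired inequality $\phi_1(r) \leq e^{h(l)(r-l)} \phi_1(l)$ is equivalent, after taking logs, to $\frac{1}{r} \int_0^r v \leq \frac{1}{l} \int_0^l v$ for $r \in (0,l]$, which is immediate since the running average of the increasing function $v$ is itself increasing. Strict monotonicity of $h$ follows from $h'(l) = (v(l) - h(l))/l > 0$ (endpoint exceeds average), and $h(l) \to c$ is a standard Cesaro-type fact applied to $v(s) \to c$. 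Finally, (iii) is a free byproduct: rearranging the ODE yields $\phi_1''(r) = c^2 \phi_1(r) - \frac{n-1}{r} \phi_1'(r) \leq c^2 \bar q$, using $\phi_1' \geq 0$ (since $v \geq 0$) and $\phi_1 \leq \phi_1(l) = \bar q$, so $C_0 = c^2 \bar q$ works. The main obstacle is the Riccati monotonicity argument for $v$ in (i); once this is in hand, everything else reduces to elementary averaging identities.
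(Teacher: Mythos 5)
Your proof is correct, and for parts (i) and (ii) it is genuinely more self-contained than the paper's: the paper simply cites Lemma 2.4 of Fusco--Leonetti--Pignotti \cite{flp} for those two statements and proves only (iii), by exactly the rearrangement $\phi_1''=c^2\phi_1-\frac{n-1}{r}\phi_1'\leq c^2\bar q$ that you give. Your Riccati analysis of $v=\psi'/\psi$ is a clean replacement: the second-derivative test at a putative first critical point of $v$ (using $v''(r_0)=\frac{n-1}{r_0^2}v(r_0)>0$, valid since $n\geq 2$ here) yields monotonicity, the Riccati identity then pins $v<c$ and forces $v\uparrow c$, and the choice $h(l)=\frac{1}{l}\int_0^l v$ turns \eqref{ex-bound} into the statement that the running average of an increasing function is increasing --- with strict monotonicity of $h$ and the limit $h(l)\to c$ falling out of the same identity. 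What this buys is independence from the Bessel-function facts invoked implicitly by the citation; what it costs is one small unstated step: you use $v=\psi'/\psi$ on all of $(0,\infty)$, which presupposes $\psi>0$ everywhere. That is easy to supply (integrate $(r^{n-1}\psi')'=c^2r^{n-1}\psi$ from $0$ to see that $\psi'\geq 0$ and hence $\psi\geq\psi(0)=1$ for as long as $\psi$ stays positive, so it never vanishes), but it should be said before the Riccati equation is written down, since the subsequent argument that $v\geq 0$ would otherwise be circular.
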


\begin{proof}
Statements (i) and (ii) are proved in \cite[Lemma 2.4]{flp}. For (iii) note that 
\begin{equation}
\phi_1^{\prime\prime} = c^2 \phi_1 - \frac{n-1}{r} \phi_1^{\prime} \leq c^2 \psi_1 \leq c^2 \bar{q},
\end{equation}
since $\phi_1$ is increasing and bounded by $\bar{q}$.
\end{proof}

An explicit computation yields, for $r\in[l,L],$
\begin{equation}\label{phi2-derivative}
\phi_2^\prime(r)=
\begin{cases}
\dfrac{\overline{Q}-\bar q}{r\log(L/l)}, &\text{for } n=2,\medskip\\
(n-2)\dfrac{l^{n-2}(\overline{Q}-\bar q)}{r^{n-1}(1-(l/L)^{n-2})}, &\text{for } n>2.
\end{cases}
\end{equation}

\begin{lemma}\label{lemma5}
The following hold.
\begin{enumerate}
\item Let the ratio $l/L$ be fixed. Then,
\begin{equation}\label{phi2-limit}
\lim_{l\to+\infty}\phi_2^\prime(l) = 0.
\end{equation}
\item Let the difference $L-l=\lambda$ be fixed. Then, $\phi_2^\prime(l)$ is a decreasing function of $l\in(0,+\infty)$ and
\begin{equation}\label{phi2-lim}
\lim_{l\to+\infty}\phi_2^\prime(r) = \frac{\overline{Q}-\bar q}{\lambda},\text{ for } r\in[l,l+\lambda].
\end{equation}
Moreover, there exists a constant $C_0$, independent of $l\in[1,+\infty)$, such that
\begin{equation}
\vert \phi_2^{\prime\prime}(r)\vert \leq \frac{C_0}{l},\text{ for } r\in[l,l+\lambda].
\end{equation}
\end{enumerate}
\end{lemma}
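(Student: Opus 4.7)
The proof is essentially a direct computation using the explicit formulas \eqref{phi2-derivative}, handled case-by-case in dimension. My plan is to substitute the constraints $l/L$ fixed (for (i)) and $L-l=\lambda$ fixed (for (ii)) into those formulas and read off the asymptotics; the harmonic equation $\phi_2''+\frac{n-1}{r}\phi_2'=0$ then gives the second-derivative bound for free.

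For (i), I write $\phi_2'(l)=(\overline{Q}-\bar q)/(l\log(L/l))$ when $n=2$ and $\phi_2'(l)=(n-2)(\overline{Q}-\bar q)/(l(1-(l/L)^{n-2}))$ when $n>2$. In both expressions the factor that multiplies $1/l$ depends only on the ratio $l/L$, so fixing that ratio forces $\phi_2'(l)=O(1/l)\to 0$.

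For (ii) with $\lambda=L-l$ fixed, the same formulas, expanded in $t=\lambda/l$, give $l\log(1+t)\to\lambda$ and $l(1-(1+t)^{-(n-2)})\to(n-2)\lambda$ as $l\to+\infty$. Combined with $r^{n-1}\sim l^{n-1}$ uniformly in $r\in[l,l+\lambda]$, this yields the claimed limit $\phi_2'(r)\to(\overline{Q}-\bar q)/\lambda$. For the monotonicity of $\phi_2'(l)$, I differentiate the denominator in $l$: in the case $n=2$ the sign reduces to the elementary inequality $\log(1+t)>t/(1+t)$ for $t>0$, and in the case $n>2$ to the positivity of $(l+\lambda)^{n-1}-l^{n-1}-(n-1)l^{n-2}\lambda$, which is just a tail of the binomial expansion. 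Either inequality is standard.

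For the $|\phi_2''|\leq C_0/l$ bound I use $\phi_2''(r)=-\frac{n-1}{r}\phi_2'(r)$ (from $\Delta\phi_2=0$); since $\phi_2'(r)$ is monotone in $l$ and converges to $(\overline{Q}-\bar q)/\lambda$, it stays bounded uniformly in $l\in[1,+\infty)$ and in $r\in[l,l+\lambda]$, so dividing by $r\geq l$ produces the required $C_0/l$ estimate.

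There is no real obstacle here — every step is an elementary manipulation of the explicit Laplace solutions on an annulus. The only mild subtlety is being careful that the estimates are \emph{uniform} in $r\in[l,l+\lambda]$, not just at the endpoint $r=l$; this is handled by noting $l\leq r\leq l+\lambda$ so that $r\sim l$ with comparison constants depending only on $\lambda$ (and, for the second-derivative bound, on the uniform boundedness of $\phi_2'$ established in the preceding step).
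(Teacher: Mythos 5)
Your proposal is correct and follows essentially the same route as the paper: both read the limits off the explicit formula \eqref{phi2-derivative}, both reduce the monotonicity of $\phi_2^\prime(l)$ to showing that the denominator $l(1-(l/(l+\lambda))^{n-2})$ is increasing (your binomial-tail positivity of $(l+\lambda)^{n-1}-l^{n-1}-(n-1)l^{n-2}\lambda$ is exactly the paper's inequality $d(\xi)=1-(n-1)\xi^{n-2}+(n-2)\xi^{n-1}>0$ after dividing by $(l+\lambda)^{n-1}$), and both get the second-derivative bound from the radial harmonic identity $\phi_2^{\prime\prime}(r)=-\tfrac{n-1}{r}\phi_2^\prime(r)$ together with the uniform bound on $\phi_2^\prime$. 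No gaps.
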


\begin{proof}
Statement (i) is a straightforward consequence of \eqref{phi2-derivative}. We prove (ii) for $n>2.$ The case $n=2$ is similar. To show that $\phi_2^\prime(l)$ is decreasing, we prove that the map $f(l) = l (1- (l / (l+\lambda))^{n-2})$ is increasing. Setting $\xi= l / (l+\lambda)$ we have
\[
f^\prime(l) = d(\xi) := 1-(n-1)\xi^{n-2}+(n-2)\xi^{n-1},\text{ for } \xi\in[0,1),
\]
and $f^\prime(l)>0$, for $l\in(0,+\infty)$, follows from $d(0)=1$, $d(1)=0$, and $d^\prime(\xi)<0$, for $\xi\in(0,1)$. The limit \eqref{phi2-lim} follows from \eqref{phi2-derivative}. The last statement of the lemma follows from
\[ 
\phi_2^{\prime\prime}(r)=-(n-1)\frac{l^{n-1}}{r^n}\phi_2^\prime(l). \qedhere
\]
\end{proof} 

Let $\varphi$ be as before and let $\delta>0$ be a small number. Denote by $\vartheta : B_{l+\delta}\setminus\overline{B_{l-\delta}} \to \R$ the solution of the problem
\begin{equation}\label{small-comp}
\begin{cases}
\Delta \vartheta = 0, &\text{in } B_{l+\delta}\setminus\overline{B_{l-\delta}},\smallskip\\
\vartheta = \varphi, &\text{on } \partial (B_{l+\delta}\setminus
\overline{B_{l-\delta}}).
\end{cases}
\end{equation}
We have $\vartheta(x)=\theta(\vert x\vert))$, where $\theta : [l-\delta, l+\delta] \to \R$ satisfies
\begin{equation}\label{theta-derivative}
\theta^\prime(r)=
\begin{cases}
\dfrac{\phi_2(l+\delta)-\phi_1(l-\delta)}{r\log\frac{l-\delta}{l-\delta}}, &\text{for } n=2,\bigskip\\ 
(n-2)\dfrac{(l-\delta)^{n-2}(\phi_2(l+\delta)-\phi_1(l-\delta))}{r^{n-1}(1-(\frac{l-\delta}{l+\delta})^{n-2})}, &\text{for } n>2.
\end{cases}
\end{equation}

\begin{lemma}\label{lemma6}
There exist positive constants $l_0,\, \lambda,\, \delta,\, \bar q^\prime < \bar{q},\, \delta^\prime,\, \mu$, such that $l\geq l_0$, $L=l+\lambda$ implies
\begin{enumerate}
\item $\phi_1^\prime(l)>\phi_2^\prime(l)+\mu,$
\item $\vartheta<\varphi, \text{ in } B_{l+\delta}\setminus\overline{B_{l-\delta}}$,
\item The map $\sigma: \overline{B_L}\to\R$ defined by $\sigma=\chi_{B_{l-\delta}\cup (\overline{B_L}\setminus \overline{B_{l+\delta}})}\varphi+\chi_{\overline{B_{l+\delta}}\setminus B_{l-\delta}}\vartheta$ satisfies
\begin{equation}\label{sigma-below}
\sigma\leq \bar q^\prime<\bar q, \text{ in } \overline{B_{l+\delta^\prime}}.
\end{equation}
\end{enumerate}
\end{lemma}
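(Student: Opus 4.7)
The plan is to choose the parameters in the order $\lambda$, $\mu$, $l_0$, $\delta$, $\delta'$, $\bar q'$, each subordinate to the previous, and to derive all three conclusions from asymptotic analysis of $\phi_1$, $\phi_2$, and the radial form of $\vartheta$ near $r=l$. Part (i) is essentially a direct application of the limit statements already in hand: Lemma \ref{lemma4}(i) gives $\phi_1'(l) \to c\bar q$ and Lemma \ref{lemma5}(ii) (with $\lambda$ fixed) gives $\phi_2'(l) \to (\overline{Q}-\bar q)/\lambda$. Fixing $\lambda$ so that $(\overline{Q}-\bar q)/\lambda < c\bar q/2$, the limit gap exceeds $c\bar q/2$, and then setting $\mu := c\bar q/4$ with $l_0$ large enough gives $\phi_1'(l) - \phi_2'(l) > \mu$ for all $l \geq l_0$.

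The engine behind Parts (ii) and (iii) is the pointwise estimate
\begin{equation*}
\vartheta(l) \leq \bar q - \tfrac{\mu}{2}\delta,
\end{equation*}
for $l \geq l_0$ and $\delta$ sufficiently small. To establish it I would Taylor-expand $\phi_1(l-\delta) = \bar q - \delta\phi_1'(l) + O(\delta^2)$ and $\phi_2(l+\delta) = \bar q + \delta\phi_2'(l) + O(\delta^2/l)$, using the uniform second-derivative bounds from Lemmas \ref{lemma4}(iii) and \ref{lemma5}(ii), and then observe that the radial harmonic $\vartheta$, given explicitly by $A + Br^{2-n}$ (or $A + B\log r$ for $n=2$), differs from the chord between its boundary values at $r = l$ by $O(\delta/l)$. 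Combining yields $\vartheta(l) \leq \bar q - \tfrac{\delta}{2}(\phi_1'(l) - \phi_2'(l)) + O(\delta^2) + O(\delta/l)$, which closes to the displayed bound once $\delta$ and $1/l$ are small compared to $\mu$.

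Given this value estimate, I would split the small annulus at $r=l$. On the outer piece $B_{l+\delta}\setminus\overline{B_l}$, both $\vartheta$ and $\varphi_2$ are radial harmonic, so $\vartheta - \varphi_2$ has the form $A' + B'r^{2-n}$; since it vanishes at $r=l+\delta$ and is negative at $r=l$ by the displayed bound, it is negative throughout the piece. On the inner piece $B_l\setminus\overline{B_{l-\delta}}$, I set $u := \varphi_1 - \vartheta$; then $(r^{n-1}u'(r))' = c^2 r^{n-1}\phi_1 > 0$, so $r^{n-1}u'$ is strictly increasing. The same Taylor-plus-explicit-formula analysis gives $u'(l-\delta) \geq \mu/2 - C\delta$, which is positive for small $\delta$, whence $u' > 0$ on $(l-\delta, l)$ and $u > 0$ from $u(l-\delta)=0$. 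This gives (ii). For (iii), on $B_{l-\delta}$ the monotonicity of $\phi_1$ and the Taylor expansion give $\sigma = \varphi_1 \leq \bar q - \delta c\bar q/4$; on the shell $B_{l+\delta'}\setminus B_{l-\delta}$, $\sigma = \vartheta$ satisfies $\vartheta(l+\delta') \leq \vartheta(l) + K\delta'$ for a uniform bound $K$ on $\vartheta'$ (again coming from the explicit formula for $B$), so choosing $\delta' < \mu\delta/(4K)$ and $\bar q' := \bar q - \mu\delta/4$ closes the argument.

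The main obstacle is the bookkeeping in Part (ii): the positivity of $u'(l-\delta)$ rests on a cancellation between $\phi_1'(l-\delta)$ and $\vartheta'(l-\delta)$, which are each close to $\phi_1'(l)$ and $(\phi_1'(l)+\phi_2'(l))/2$ respectively, so the $O(\delta)$ and $O(\delta/l)$ corrections must remain smaller than $\mu/2$. This dictates the choice of $\delta$ in terms of $\mu$, the second-derivative constants $C_0$ from Lemmas \ref{lemma4}(iii) and \ref{lemma5}(ii), and $l_0$, and the argument is consistent only because $\lambda$ (hence $\mu$) is fixed before $\delta$. The uniformity of every implied constant as $l \to \infty$, guaranteed by the limit statements in Lemmas \ref{lemma4} and \ref{lemma5}, must be tracked throughout.
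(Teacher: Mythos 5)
Your proposal is correct and follows essentially the same route as the paper: the derivative gap in (i) obtained from the limits in Lemmas \ref{lemma4} and \ref{lemma5}, the uniform Taylor expansion of $\phi_1(l-\delta)$ and $\phi_2(l+\delta)$, and the identification of $\theta'$ with $\tfrac12(\phi_1'(l)+\phi_2'(l))$ up to $O(\delta)$ are exactly the quantitative inputs used there. The only (harmless) variations are that you reach (i) by fixing the difference $\lambda$ directly rather than first fixing the ratio $l/L$, and that you conclude (ii) from the value estimate $\vartheta(l)<\bar q$ together with monotonicity of $r^{n-1}u'$ and of $A'+B'r^{2-n}$ on each half-annulus, whereas the paper compares derivatives ($\theta'<\phi_1'$ on $[l-\delta,l]$, $\theta'>\phi_2'$ on $[l,l+\delta]$) and integrates from the endpoints where the functions agree.
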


\begin{proof}
Letting the ratio $\rho= l/L$ be fixed, then \eqref{phi1-limit} and \eqref{phi2-limit} imply that there is an $l_0$ such that (i) holds for $l=l_0$ and some $\mu>0$. Fixing $\lambda=l_0 ((l / \rho)-1)$, then (i) holds for all $l\geq l_0$. This follows from Lemmas \ref{lemma4} and (ii) of Lemma \ref{lemma5}, which imply that $\phi_1^\prime(l)$ is increasing and $\phi_2^\prime(l)$ is decreasing for fixed $\lambda$. From \eqref{theta-derivative}, the relation
\[ 
\phi_2(l+\delta)-\phi_1(l-\delta)=(\phi_2^\prime(l)+\phi_1^\prime(l))\delta+o(\delta),
\]
which holds uniformly in $l$ since $\phi_1(l)=\phi_2(l)=\bar{q}$, and 
\[
\log\frac{l+\delta}{l-\delta} = 2\frac{\delta}{l}+o(\delta),\qquad
\left( \frac{l-\delta}{l+\delta} \right)^{n-2} = 1-2(n-2)\frac{\delta}{l}+o(\delta),
\]
it follows that
\begin{align}\label{theta-phi}
\left\vert \theta^\prime(r)-\frac{1}{2}(\phi_2^\prime(l)+\phi_1^\prime(l)) \right\vert &\leq C\delta, \text{ for } r\in[l-\delta,l+\delta],\\
\vert\theta^{\prime\prime}\vert &\leq \frac{C}{l}, \text{ for } r\in[l-\delta,l+\delta] 	
\end{align}
for some constant $C>0$, independent of $l\in[l_0,+\infty)$. From (i) and \eqref{theta-phi}, and the bounds on $\phi_1^{\prime\prime}$, $\phi_2^{\prime\prime}$, $\theta^{\prime\prime}$, it follows that there is a small $\delta>0$, independent of $l\in[l_0,+\infty)$, such that
\begin{equation*}\label{theta-phi2}
\begin{cases}
\theta^\prime(r) < \phi_1^\prime(r), &\text{for } r\in[l-\delta,l],\smallskip\\
\theta^\prime(r) > \phi_2^\prime(r), &\text{for } r\in[l,l+\delta].
\end{cases}
\end{equation*}
This and $\theta(l-\delta)=\phi_1(l-\delta)$, $\theta(l+\delta)=\phi_2(l+\delta)$, prove (ii). The existence of the number $\bar q^\prime < \bar q$ and $0 < \delta^\prime < \delta$, independent of $l\in[l_0,+\infty)$, follows by the same arguments and from the existence of the limits \eqref{phi1-limit} and \eqref{phi2-lim}.
\end{proof}

\section{Uniform estimates in $R$}
In this section we will make use of special notation. We denote by $B_R (x_R)$ the ball of radius $R>0$ centered at $x_R$. As before, $B_R$ denotes the ball of radius $R>0$ centered at the origin and $D_{4R} = D \cap B_{4R}$, with $x_R$ a point in $D_{4R}$ such that $B_R (x_R) \subset D_{4R}$. The function $u_{4R}$ is the minimizer for the functional $J_{B_{4R}}$ in Corollary \ref{coro}.

Set
\begin{equation}
v_R (x) := \frac{Q(u_{4R} (x)) - \bar{q}/2}{\overline{Q} - \bar{q}/2}, \text{ for } x \in B_R (x_R),
\end{equation}
where $\bar{q}$ as in Hypothesis \ref{h1}, $\overline{Q}$ as in Lemma \ref{lemma3}, Hypothesis \ref{h2}, with $\overline{Q} > \bar{q}/2$. We will also rescale the dependent variable via $y = (x - x_R) / R$ and define
\begin{equation}
\hat{v}_R (y) := v_R (Ry + x_R) = v_R (x), \text{ for } y \in \hat{B}_1,
\end{equation}
where $\hat{B}_{1} := \{ y \in \R^n \mid |y| < 1 \}$, $B_{1} := \{ x \in \R^n \mid |x| < 1 \}$, and ${B}_{R}^{+} (x_R) := \{ x \in {B}_{R} (x_R) \mid v_R (x) \geq 0 \}$, ${B}_{R}^{-} (x_R) := \{ x \in {B}_{R} (x_R) \mid v_R (x) \leq 0 \}$, and analogously, $\hat{B}_{1}^{+} := \{ y \in \hat{B}_{1} \mid \hat{v}_R (y) \geq 0 \}$, ${B}_{1}^{-} := \{ y \in \hat{B}_{1} \mid \hat{v}_R (y) \leq 0 \}$. Notice that $\hat{B}_{1}^{+}$, $\hat{B}_{1}^{-}$ depend on $R$.

By definition 
\begin{equation}
Q(u_{4R} (x)) \geq \frac{\bar{q}}{2}, \text{ on } B_{R}^{+} (x_R).
\end{equation}
By positivity ((iii) of Lemma \ref{lemma3}) and equivariance, there holds $u_{4R} (B_R(x_R)) \subset u_{4R} (D_{4R}) \subset \overline{D}$. Hence,
\begin{equation}\label{w4r}
W(u_{4R} (x)) \geq \varepsilon_0 (\bar{q}) > 0, \text{ on } B_{R}^{+} (x_R),
\end{equation}
since $a_1$ is the unique zero of $W$ in $\overline{D}$ (Hypotheses \ref{h3}, \ref{h4}).

\begin{lemma}\label{lemma8}
The following estimate holds for the Lebesgue measure of $\hat{B}_{1}^{-}$.
\begin{equation}\label{b-estimate}
|\hat{B}_{1}^{-}| \geq |\hat{B}_{1}| - \frac{C}{\varepsilon_0 (\bar{q}) R},
\end{equation}
where $C$ is a constant depending only on the constant $C$ in (i) of Lemma \ref{lemma3} and the dimension $n$.
\end{lemma}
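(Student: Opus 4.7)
The plan is to combine three ingredients already in hand: the energy bound of item (i) of Lemma \ref{lemma3}, the pointwise lower bound \eqref{w4r} for $W(u_{4R})$ on the set $B_R^+(x_R)$, and the scaling relation between $B_R^+(x_R)$ and $\hat{B}_1^+$.

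First, I would write down the energy bound restricted to the subdomain of interest. Since $B_R(x_R) \subset B_{4R}$ and $W \geq 0$, item (i) of Lemma \ref{lemma3} applied on $B_{4R}$ gives
\[
\int_{B_R^+(x_R)} W(u_{4R}(x))\,\dd x \;\leq\; \int_{B_{4R}} W(u_{4R}(x))\,\dd x \;\leq\; J_{B_{4R}}(u_{4R}) \;\leq\; C(4R)^{n-1} \;\leq\; C' R^{n-1},
\]
where $C'$ depends only on the constant in (i) of Lemma \ref{lemma3} and on $n$.

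Next, I would apply the lower bound \eqref{w4r}, which asserts that on $B_R^+(x_R)$ the minimizer $u_{4R}$ stays away from the unique zero $a_1$ of $W$ in $\overline{D}$, and therefore $W(u_{4R}) \geq \varepsilon_0(\bar{q}) > 0$. Combining with the preceding display yields
\[
\varepsilon_0(\bar{q})\, |B_R^+(x_R)| \;\leq\; C' R^{n-1},
\qquad\text{hence}\qquad
|B_R^+(x_R)| \;\leq\; \frac{C' R^{n-1}}{\varepsilon_0(\bar{q})}.
\]

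Finally, I would translate this into a statement about $\hat{B}_1^\pm$ through the change of variables $y = (x-x_R)/R$, which has Jacobian $R^{-n}$ and maps $B_R^+(x_R)$ onto $\hat{B}_1^+$. Thus
\[
|\hat{B}_1^+| \;=\; R^{-n} |B_R^+(x_R)| \;\leq\; \frac{C'}{\varepsilon_0(\bar{q})\, R}.
\]
Since $\hat{B}_1 = \hat{B}_1^+ \cup \hat{B}_1^-$ (with overlap only on the zero level set of $\hat{v}_R$, which has measure $\leq |\hat{B}_1^+|$ anyway, and in any case does not affect the inequality $|\hat{B}_1| \leq |\hat{B}_1^+| + |\hat{B}_1^-|$), we obtain
\[
|\hat{B}_1^-| \;\geq\; |\hat{B}_1| - |\hat{B}_1^+| \;\geq\; |\hat{B}_1| - \frac{C}{\varepsilon_0(\bar{q})\, R},
\]
which is \eqref{b-estimate}. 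There is no real obstacle here: the substantive work has already been carried out in item (i) of Lemma \ref{lemma3} (the linear-in-$R^{n-1}$ energy bound obtained from the affine competitor) and in establishing \eqref{w4r} via the positivity provided by (iii) of Lemma \ref{lemma3} together with Hypotheses \ref{h3}--\ref{h4}. The remaining argument is just a Chebyshev-type pigeonhole application followed by rescaling.
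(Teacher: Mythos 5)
Your proposal is correct and follows essentially the same route as the paper: the $CR^{n-1}$ energy bound of (i) of Lemma \ref{lemma3}, restriction to $B_R^+(x_R)$ using $W\geq 0$, the lower bound \eqref{w4r}, and the rescaling $y=(x-x_R)/R$ with Jacobian $R^{-n}$. Your extra remark about the overlap of $\hat{B}_1^{+}$ and $\hat{B}_1^{-}$ on the zero level set is a slight refinement of the paper's bookkeeping (which writes an equality there), but the argument is otherwise identical.
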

\begin{proof}
We have
\begin{align*}
C R^{n-1} &\geq \int_{B_{4R}} W(u_{4R} (x)) \,\dd x \quad \text{(by (i) of Lemma \ref{lemma3})}\\
&\geq \int_{B_{R}^{+}(x_R)} W(u_{4R} (x)) \,\dd x \quad \text{(by } W \geq 0)\\
&\geq \varepsilon_0 (\bar{q})\, |B_{R}^{+} (x_R)| \quad \text{(by \eqref{w4r})}.
\end{align*}
Therefore,
\[ 
\frac{C}{R} \geq \varepsilon_0 (\bar{q}) \frac{|B_{R}^{+} (x_R)|}{R^n} = \varepsilon_0 (\bar{q})\, |\hat{B}_{1}^{+}|, \]
hence
\[ |\hat{B}_{1}^{-}| = |\hat{B}_{1}| - |\hat{B}_{1}^{+}| \geq |\hat{B}_{1}| - \frac{C}{\varepsilon_0 (\bar{q}) R}. \qedhere
\]
\end{proof}

\begin{remark}
The lemma above is a direct consequence of the basic integral estimate in (i) Lemma \ref{lemma3} and the assumption that in $D$ the potential has a unique zero. Estimate \eqref{b-estimate} states that the minimizer $u_{4R} (x)$ on a set of large measure in $B_R (x_R)$ is close to $a_1$, the zero of $W$, for $R \to \infty$. 
\end{remark}

The point in the next lemma is that the subharmonicity of $Q(u_{4R}(x))$ in $D$ (by (iv) of Lemma \ref{lemma3}) via a classical result of De Giorgi (see Appendix) allows us to obtain a pointwise estimate in the ball $B_{R/2} (x_R)$ of half the radius.

\begin{lemma}\label{lemma9}
Fix a $\delta \in (0,1)$. Then, for $R$ large enough such that
\begin{equation}\label{ed-estimate}
1 - \frac{1}{\varepsilon_0 (\bar{q})} \frac{C}{R} \frac{1}{c_0} \geq 1 - \delta,
\end{equation}
we have the estimate
\begin{equation}\label{sup-estimate}
\sup_{B_{R/2}(x_R)} Q(u_{4R} (x)) \leq \frac{\bar{q}}{2} + \mu (1-\delta) \left( \overline{Q} - \frac{\bar{q}}{2} \right),
\end{equation}
where $\mu(\cdot)$ is defined in the Appendix, with $\mu (1-\delta) < 1$.
\end{lemma}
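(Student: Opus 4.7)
The strategy is to apply the De Giorgi oscillation lemma from the Appendix to the rescaled function $\hat v_R$ on the unit ball $\hat B_1$, converting the measure-theoretic bound of Lemma \ref{lemma8} into the pointwise bound \eqref{sup-estimate}. The whole argument is a one-shot application: Lemma \ref{lemma8} provides the size of the $\{\hat v_R\leq 0\}$-set, subharmonicity is inherited from (iv) of Lemma \ref{lemma3}, and De Giorgi supplies the conclusion in the smaller ball $\hat B_{1/2}$.

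First I would check the hypotheses on $\hat v_R$. Since $B_R(x_R)\subset D_{4R}$, item (iv) of Lemma \ref{lemma3} gives $\Delta Q(u_{4R})\geq 0$ in $B_R(x_R)$, so $v_R$ is subharmonic there (it is an affine function of $Q(u_{4R})$ with positive slope, as $\overline{Q}>\bar q/2$), and the dilation $\hat v_R(y)=v_R(Ry+x_R)$ transfers this to $\hat B_1$, where moreover $\hat v_R\leq 1$ by (i) of Lemma \ref{lemma3}. Setting $c_0:=|\hat B_1|$, Lemma \ref{lemma8} rewrites as
\[
\frac{|\hat B_1^-|}{|\hat B_1|}\;\geq\;1-\frac{C}{\varepsilon_0(\bar q)\,R\,c_0},
\]
and condition \eqref{ed-estimate} is exactly $|\{\hat v_R\leq 0\}\cap\hat B_1|\geq (1-\delta)|\hat B_1|$.

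Feeding these data into the De Giorgi oscillation lemma (Appendix) applied to the subharmonic function $\hat v_R\leq 1$ on $\hat B_1$ whose zero-sublevel set occupies at least a $(1-\delta)$-fraction of $\hat B_1$ produces a constant $\mu(1-\delta)<1$ such that
\[
\sup_{\hat B_{1/2}}\hat v_R\;\leq\;\mu(1-\delta).
\]
Unscaling via $x=Ry+x_R$ gives $\sup_{B_{R/2}(x_R)} v_R\leq\mu(1-\delta)$, and solving the definition of $v_R$ for $Q(u_{4R})$ yields precisely \eqref{sup-estimate}.

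The only genuinely non-mechanical point is aligning the precise wording of the Appendix's oscillation lemma with the setup here: one needs it stated for subharmonic functions bounded above by $1$ on $\hat B_1$ with a lower density hypothesis on $\{\hat v_R\leq 0\}$, producing a quantitative $\mu(1-\delta)<1$ on $\hat B_{1/2}$; if it is instead phrased for supersolutions or with a different sublevel threshold, an affine change of unknown (e.g.\ replacing $\hat v_R$ by $1-\hat v_R$) reduces to it trivially. Everything else — preservation of subharmonicity under the dilation $y=(x-x_R)/R$, the bound $\hat v_R\leq 1$, and the identification $c_0=|\hat B_1|$ — is routine bookkeeping.
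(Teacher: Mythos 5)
Your proposal is correct and follows essentially the same route as the paper: verify that $\hat v_R$ is subharmonic and bounded by $1$ on $\hat B_1$ via (iv) and (i) of Lemma \ref{lemma3}, convert Lemma \ref{lemma8} into the density hypothesis $|\hat B_1^-|/|\hat B_1|\geq 1-\delta$ using \eqref{ed-estimate} with $c_0=|\hat B_1|$, apply the De Giorgi oscillation lemma, and unscale. The paper's proof is exactly this one-shot application, so no further comment is needed.
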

Here $C$ is the constant in Lemma \ref{lemma8} and $c_0$ is the volume of the unit ball in $\R^n$.

\begin{proof}
Note that $\Delta_y \hat{v}_R \geq 0$ in $\hat{B}_1$ and $\hat{v}_R \leq 1$,  in $\hat{B}_1$, by (iv) and (i) of Lemma \ref{lemma3} respectively, and moreover
\[
\frac{|\hat{B}_{1}^{-}|}{|\hat{B}_{1}|} \geq 1 - \frac{1}{\varepsilon_0 (\bar{q})} \frac{C}{R} \frac{1}{c_0} \geq 1 - \delta
\]
by \eqref{b-estimate}. Hence, by the lemma in the Appendix,
\[
\sup_{\hat{B}_{1/2}} \hat{v}_R (y) \leq \mu (1-\delta) < 1,
\]
which is equivalent to \eqref{sup-estimate}.
\end{proof}

Next we will iterate. The number $\delta$ is fixed in Lemma \ref{lemma9} and we select $k$ as the minimal integer with the property
\begin{equation}\label{k-property}
\frac{\bar{q}}{2} + (\mu(1-\delta))^k \left( \overline{Q} - \frac{\bar{q}}{2} \right) < \bar{q}.
\end{equation}
Clearly $k$ depends only on $\delta$. Finally we choose $R_0 = R_0 (\delta)$ such that 
\begin{equation}\label{r-nought}
1 - \frac{1}{\varepsilon_0 (\bar{q})} \frac{C}{R} \frac{1}{|\hat{B}_{1/2^k}|} \geq 1-\delta, \text{ for } R \geq R_0,
\end{equation}
with $C$ as in Lemma \ref{lemma9}.

From now on, $R$, in the definition of $A^R$ and in the definition of the minimizer $u_R$, is assumed to satisfy \eqref{r-nought}, and free otherwise. For such an $R$ we define
\[
\begin{cases}
\overline{Q}_0 := \overline{Q},\medskip\\
\overline{Q}_i := \dfrac{\bar{q}}{2} + \mu (a^*) \left( \overline{Q}_{i-1} - \dfrac{\bar{q}}{2} \right), \medskip\\
v_i (x) := \dfrac{Q(u_{4R}(x)) - \bar{q}/2}{\overline{Q}_{i-1} - \bar{q}/2}, \text{ for } x \in B_{R/2^i} (x_R),\medskip\\
\hat{v}_i (y) := v_i (Ry), \text{ for } y \in \hat{B}_{1/2^i},
\end{cases}
\]
for $i=1,2,\ldots,k$ and $a^* = 1-\delta$. 

We notice that \eqref{r-nought} implies all the corresponding inequalities for $i=1,2,\ldots,k$ and, particular, \eqref{ed-estimate}.

\begin{lemma}
For an integer $k=k(\delta)$, as in \eqref{k-property}, and for $R \geq R_0 (\delta)$, as in \eqref{r-nought}, the following estimate holds.
\begin{equation}\label{lemma10-estimate}
\sup_{B_{R/2^k}(x_R)} Q(u_{4R} (x)) \leq \frac{\bar{q}}{2} + (\mu(a^*))^k \left( \overline{Q} - \frac{\bar{q}}{2}\right) < \bar{q}.
\end{equation}
\end{lemma}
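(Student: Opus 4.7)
The plan is to prove by induction on $i = 0, 1, \ldots, k$ the estimate
\[
\sup_{B_{R/2^i}(x_R)} Q(u_{4R}(x)) \leq \overline{Q}_i,
\]
by applying the De Giorgi oscillation lemma from the Appendix (the same tool underlying Lemma \ref{lemma9}) once at each dyadic scale. The base case $i=0$ is just the trivial bound $Q(u_{4R}) \leq \overline{Q} = \overline{Q}_0$ on $B_R(x_R)$ coming from part (i) of Lemma \ref{lemma3}.

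For the inductive step, suppose the estimate holds at scale $R/2^{i-1}$. Rescale to the unit ball by setting
\[
w(x) := \frac{Q(u_{4R}(x)) - \bar{q}/2}{\overline{Q}_{i-1} - \bar{q}/2}\ \text{on } B_{R/2^{i-1}}(x_R),\qquad \tilde{w}(y) := w\bigl((R/2^{i-1})\,y + x_R\bigr),\ y\in \hat{B}_1.
\]
Then $\tilde{w}$ is subharmonic on $\hat{B}_1$ (by (iv) of Lemma \ref{lemma3}), bounded above by $1$ (by the inductive hypothesis), and I would next check the measure condition required by the appendix lemma. By the basic bound (i) of Lemma \ref{lemma3} together with the lower bound \eqref{w4r},
\[
\bigl|\{Q(u_{4R}) \geq \bar{q}/2\} \cap B_{R/2^{i-1}}(x_R)\bigr| \leq \frac{C R^{n-1}}{\varepsilon_0(\bar{q})},
\]
so, after rescaling, the measure of $\{\tilde{w} > 0\}$ inside $\hat{B}_1$ is bounded by $2^{n(i-1)} C/(\varepsilon_0(\bar{q})\, R\, c_0)$, which is $\leq \delta$ whenever $i \leq k$, thanks to the uniform choice \eqref{r-nought}. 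The appendix lemma then yields $\sup_{\hat{B}_{1/2}} \tilde{w} \leq \mu(a^*) < 1$, and unwinding this rescaling gives $Q(u_{4R}) \leq \bar{q}/2 + \mu(a^*)(\overline{Q}_{i-1} - \bar{q}/2) = \overline{Q}_i$ on $B_{R/2^i}(x_R)$, closing the induction.

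Iterating up to $i=k$ and unwinding the recursion $\overline{Q}_i = \bar{q}/2 + \mu(a^*)(\overline{Q}_{i-1} - \bar{q}/2)$ produces the stated value $\overline{Q}_k = \bar{q}/2 + (\mu(a^*))^k(\overline{Q} - \bar{q}/2)$, which is strictly less than $\bar{q}$ by the defining property \eqref{k-property} of $k$. The main technical obstacle to watch is that the ``bad'' super-level set $\{Q(u_{4R}) \geq \bar{q}/2\}$ has a \emph{fixed} measure of order $R^{n-1}/\varepsilon_0$, while the test ball at step $i$ has volume of order $(R/2^{i-1})^n$, so the relative measure deteriorates by a factor $2^n$ with each iteration; condition \eqref{r-nought} is precisely the uniform choice of $R_0(\delta)$ that absorbs the worst case $i=k$ in one stroke and keeps the De Giorgi hypothesis valid along the whole iteration.
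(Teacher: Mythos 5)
Your proposal is correct and follows essentially the same route as the paper: the same induction on dyadic scales, the same measure estimate for the super-level set $\{Q(u_{4R})\geq \bar q/2\}$ derived from (i) of Lemma \ref{lemma3} and \eqref{w4r}, and the same use of \eqref{r-nought} to absorb the worst-case loss of a factor $2^{n}$ per step so that the De Giorgi lemma applies at every scale. The only (immaterial) difference is that you renormalize each ball $B_{R/2^{i-1}}(x_R)$ to the unit ball, while the paper keeps the scaling factor $R$ fixed and invokes the scale-invariance remark following the appendix lemma to apply it on $\hat{B}_{1/2^{i-1}}$.
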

\begin{proof}
We make the simple observation that
\begin{equation}\label{simple-obsrv}
\sup_{B_{R/2^i}(x_R)} Q(u_{4R} (x)) \leq \frac{\bar{q}}{2} + (\mu(a^*))^i \left( \overline{Q} - \frac{\bar{q}}{2} \right)
\end{equation}
holds for $i=1,2,\ldots,k$. 

We note that for $i=1$ this is just \eqref{sup-estimate}. Let us establish \eqref{simple-obsrv} for $i=2$. We may assume that $k\geq 3$ since otherwise we have the estimate we need, hence $\overline{Q}_1 > \bar{q}/2$. We have $Q(u_{4R}(x)) \leq \overline{Q}_1$, on $B_{R/2}(x_R)$, by \eqref{sup-estimate}, and $Q(u_{4R}(x)) \geq \bar{q}/2$, on $B_{R/2}^{+} (x_R)$, by definition. Hence,
\begin{align*}
C R^{n-1} &\geq \int_{B_{R}^{+}(x_R)} W(u_{4R} (x)) \,\dd x\\
&\geq \int_{B_{R/2}^{+}(x_R)} W(u_{4R} (x)) \,\dd x \quad \text{(cf.\ Proof of Lemma \ref{lemma8})}\\
&\geq \varepsilon_0 (\bar{q})\, |B_{R/2}^{+} (x_R)| \quad \text{(by \eqref{w4r})}.
\end{align*}
Therefore,
\[ 
\frac{C}{R} \geq \varepsilon_0 (\bar{q}) \frac{|B_{R/2}^{+} (x_R)|}{R^n} = \varepsilon_0 (\bar{q}) |\hat{B}_{1/2}^{+}|,
\]
hence,
\[
|\hat{B}_{1/2}^{-}| = |\hat{B}_{1/2}| - |\hat{B}_{1/2}^{+}| \geq |\hat{B}_{1/2}| - \frac{C}{\varepsilon_0 (\bar{q}) R}.
\]
It follows that
\begin{equation}
\frac{|\hat{B}_{1/2}^{-}|}{|\hat{B}_{1/2}|} \geq 1 - \frac{C}{\varepsilon_0 (\bar{q}) R |\hat{B}_{1/2}|} \geq 1 -\delta,
\end{equation}
by \eqref{r-nought}.

On the other hand, $\Delta_y \hat{v}_2 (y) \geq 0$ in $\hat{B}_{1/2}$ and $\hat{v}_2 \leq 1$, in $\hat{B}_{1/2}$, hence, by the lemma in the Appendix,
\[
\sup_{\hat{B}_{1/2^2}} \hat{v}_2 (y) \leq \mu(a^*),
\]
which equivalently gives
\[
\sup_{B_{R/2^2}(x_R)} Q(u_{4R}(x)) \leq \frac{\bar{q}}{2} + \mu(a^*) \left( \overline{Q}_1 - \frac{\bar{q}}{2} \right),
\]
or
\[
\sup_{B_{R/2^2}(x_R)} Q(u_{4R}(x)) \leq \frac{\bar{q}}{2} + \mu(a^*)^2 \left( \overline{Q} - \frac{\bar{q}}{2} \right).
\]
By repeating this process for $i=3,\ldots,k$, we obtain \eqref{lemma10-estimate}.
\end{proof}

So far we have established that 
\begin{equation}\label{so-far}
\sup_{{B}_{R^*}(x_R)} Q(u_{4R}(x)) \leq \bar{q},
\end{equation}
where $R^* = {R} / {2^k}$, for $R \geq R_0$, and an integer $k$ independent of $R$. Utilizing the comparison function $\sigma$ in Section \ref{section-comparison} it is possible to show that the ball $B_{R^*} (x_R)$ in the supremum in \eqref{so-far} can be replaced by a large set $D_{R}^{*}$ which includes all of $D_{4R}$ with the exception of a strip along the boundary $\partial D$ of width $d_0$ independent of $R$, for $R \geq R_0$, that is,
\begin{equation}\label{drstar-supset}
D_{R}^{*} \supset \{ x \in D_{4R} \mid d(x,\partial D_{4R}) \geq d_0 \},
\end{equation}
for some $d_0 >0$, which depends on $l_0$ in Lemma \ref{lemma6}.

\begin{lemma}\label{lemma11}
The following estimate holds.
\begin{equation}\label{lemma11-estimate}
\sup_{D_{R}^{*}} Q(u_{4R}) \leq \bar{q},
\end{equation}
where $D_{R}^{*}$ has the properties stated above.
\end{lemma}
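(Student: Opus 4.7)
The goal is to promote the strict bound $Q(u_{4R})\leq \bar q^\prime < \bar q$, valid on each deep‑interior ball $B_{R^*}(x_R)$ by \eqref{lemma10-estimate} (choose $k=k(\delta)$ large enough that the right‑hand side there is $\leq \bar q^\prime$, with $\bar q^\prime$ as in Lemma \ref{lemma6}), to a region $D_R^*$ that contains every point of $D_{4R}$ lying at distance $\geq d_0$ from $\partial D_{4R}$, for some $d_0$ independent of $R$. The mechanism is the comparison map $\sigma$ of Section \ref{section-comparison} used as a distributional supersolution on an annulus that is then translated incrementally from the deep interior toward $\partial D_{4R}$.

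\textbf{Single comparison step.} Suppose $y\in D_{4R}$ satisfies $\overline{B_L(y)}\subset D_{4R}$ and $Q(u_{4R})\leq \bar q^\prime$ on $\overline{B_l(y)}$, with $l=l_0$ and $L=l_0+\lambda$. Center $\sigma$ radially at $y$ and set $w:=Q(u_{4R})-\sigma$ on the annulus $A:=B_L(y)\setminus\overline{B_l(y)}$. By Lemma \ref{lemma3}(iv) (applied in $D_{4R}\supset A$, permissible because $u_{4R}(\overline{D_{4R}})\subset\overline D$ by positivity), $Q(u_{4R})$ is distributionally subharmonic on $A$. On $A$ the function $\sigma$ equals $\vartheta$ on the inner piece $B_{l+\delta}(y)\cap A$ and $\varphi_2$ on the outer piece $B_L(y)\setminus\overline{B_{l+\delta}(y)}$, each harmonic; across the spherical interface $|x-y|=l+\delta$ the proof of Lemma \ref{lemma6} gives $\theta^\prime(l+\delta)>\phi_2^\prime(l+\delta)$, so the outward radial derivative of $\sigma$ jumps downward, contributing a nonpositive Radon measure to $\Delta\sigma$ there. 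Hence $\Delta\sigma\leq 0$ distributionally on $A$, and therefore $\Delta w\geq 0$ on $A$. On $\partial B_l(y)$ we have $\sigma=\bar q$ and $Q(u_{4R})\leq \bar q^\prime<\bar q$; on $\partial B_L(y)$ we have $\sigma=\overline Q\geq Q(u_{4R})$ by Lemma \ref{lemma3}(i). The weak maximum principle for $W^{1,2}$ subharmonic functions with trace $\leq 0$ on $\partial A$ yields $Q(u_{4R})\leq \sigma$ in $A$. Combining this with Lemma \ref{lemma6}(iii), which asserts $\sigma\leq \bar q^\prime$ on $\overline{B_{l+\delta^\prime}(y)}$, we conclude
\[
Q(u_{4R})\leq \bar q^\prime \quad\text{on}\quad \overline{B_{l+\delta^\prime}(y)},
\]
so the set where the strict bound holds has been enlarged from $\overline{B_l(y)}$ to $\overline{B_{l+\delta^\prime}(y)}$, by the fixed amount $\delta^\prime>0$ in every direction from $y$.

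\textbf{Iteration and conclusion.} Fix $x^*\in D_{4R}$ with $d(x^*,\partial D_{4R})\geq d_0:=L+\delta^\prime$. Choose $x_R$ with $B_R(x_R)\subset D_{4R}$, so that $\overline{B_l(x_R)}\subset B_{R^*}(x_R)$ for $R$ large (recall $R^*=R/2^k\to\infty$), and connect $x_R$ to $x^*$ by a polygonal arc lying in $\{z\in D_{4R}:d(z,\partial D_{4R})\geq L\}$. Discretize the arc into $y_0=x_R,y_1,\dots,y_N=x^*$ with $|y_{j+1}-y_j|\leq \delta^\prime$, so that $\overline{B_l(y_{j+1})}\subset\overline{B_{l+\delta^\prime}(y_j)}$. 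Applying the single comparison step successively propagates $Q(u_{4R})\leq \bar q^\prime$ from $\overline{B_l(y_0)}$ along the chain, reaching $x^*$ after finitely many steps and giving $Q(u_{4R})(x^*)\leq \bar q^\prime<\bar q$. Since $x^*$ was arbitrary, \eqref{lemma11-estimate} holds on $D_R^*\supset\{x\in D_{4R}:d(x,\partial D_{4R})\geq d_0\}$ with $d_0$ determined by $l_0,\lambda,\delta^\prime$ alone.

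\textbf{Main obstacle.} The delicate point is the distributional inequality $\Delta\sigma\leq 0$ on $A$, which rests on verifying that the $C^0$ matching of $\vartheta$ with $\varphi_2$ at $|x-y|=l+\delta$ produces a radial‑derivative jump of the correct (downward) sign, so that the jump contribution to $\Delta\sigma$ is a nonpositive measure; equivalently, that $\sigma$ is superharmonic across the interface in the distributional sense. Once this is in place, the remainder is a routine covering argument in which the per‑step improvement $\delta^\prime$ is uniform in $R$.
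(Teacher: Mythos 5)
Your global strategy---translate the comparison map $\sigma$ step by step from the deep interior toward $\partial D$, gaining a fixed amount $\delta^\prime$ per step uniformly in $R$, then chain along a polygonal arc---is the paper's, and your observation that the downward jump of the radial derivative at $|x-y|=l+\delta$ makes $\sigma$ distributionally superharmonic across that interface is correct. But your single comparison step is broken at the inner boundary. On $\partial B_l(y)$ one has $\sigma=\vartheta$, not $\varphi$: by Lemma \ref{lemma6}(ii)--(iii), $\vartheta<\varphi=\bar q$ there, and in fact $\sigma\leq\bar q^\prime$ on all of $\overline{B_{l+\delta^\prime}(y)}\supset\partial B_l(y)$. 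So your claimed ordering $Q(u_{4R})\leq\bar q^\prime<\bar q=\sigma$ on the inner component of $\partial A$ is false---both functions are merely $\leq\bar q^\prime$ there and cannot be compared---and the weak maximum principle for $w=Q(u_{4R})-\sigma$ on $A$ cannot be invoked.

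More fundamentally, by working only on the annulus you have discarded the inner comparison function $\varphi_1$, the solution of $\Delta\varphi_1=c^2\varphi_1$ in $B_l$ with $\varphi_1=\bar q$ on $\partial B_l$ from \eqref{final-comp}, and with it the only mechanism that makes the step iterable the order of $R/\delta^\prime$ times. If you repair the boundary datum by comparing $Q(u_{4R})$ on $A$ with the harmonic function taking the values $\bar q^\prime$ on $\partial B_l(y)$ and $\overline Q$ on $\partial B_L(y)$, the bound you get at radius $l+\delta^\prime$ is roughly $\bar q^\prime+(\overline Q-\bar q^\prime)\delta^\prime/\lambda>\bar q^\prime$: the constant degrades at every step, so the invariant ``$Q\leq\bar q^\prime$ on the current ball'' is not preserved and the estimate is lost long before the chain reaches a general point of $D_R^*$. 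The paper's step instead propagates the invariant ``$Q(u_{4R})\leq\bar q$ on the current region'': it first compares with $\varphi_1$ inside $B_l(\xi)$ using the refined inequality $\Delta Q(u_{4R})\geq c^2Q(u_{4R})$ of \eqref{laplace-q-estimate} (available precisely because $Q\leq\bar q$ forces $|u_{4R}-a_1|\leq\bar q$), which yields $Q\leq\varphi_1$ with $\varphi_1$ strictly below $\bar q$ in the interior; then with $\varphi_2$ on the annulus; and only then with the harmonic bridge $\vartheta$ on $B_{l+\delta}\setminus\overline{B_{l-\delta}}$, whose boundary data $\phi_1(l-\delta)$ and $\phi_2(l+\delta)$ are already known to dominate $Q(u_{4R})$. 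The steepness inequality $\phi_1^\prime(l)>\phi_2^\prime(l)+\mu$ of Lemma \ref{lemma6}(i) is what keeps $\vartheta\leq\bar q^\prime<\bar q$ out to radius $l+\delta^\prime$, and it is the term $c^2\varphi_1$ that supplies this strict gain. To fix the proof, reinstate the three-stage comparison ($\varphi_1$, then $\varphi_2$, then $\vartheta$) as in Section \ref{section-comparison}, with $Q(u_{4R})\leq\bar q$ as the quantity being propagated from \eqref{so-far}.
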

\begin{proof}
First we note that by Hypotheses \ref{h1}, \ref{h4},
\[ 
\langle Q_u(u), W_u(u) \rangle \geq c^2 Q(u), \text{ for } |u-a_1| \leq \bar{q},
\]
which implies, via Lemma \ref{lemma1}, (ii) of Lemma \ref{lemma3}, and \eqref{so-far}, the estimate
\begin{equation}\label{laplace-q-estimate}
\Delta Q (u_{4R}) \geq \langle \Delta u_{4R}, Q_u(u_{4R}) \rangle = \langle W_u (u_{4R}), Q_u(u_{4R}) \rangle \geq c^2 Q(u_{4R}),
\end{equation}
in $W^{1,2}_{\mathrm{loc}} (B_{R^*} (x_R))$.

Next we refer to Section \ref{section-comparison}. Consider a ball $B_l (\xi)$, tangent to $\partial B_{R^*} (x_R)$ and with its center $\xi$ inside $B_{R^*} (x_R)$, and also consider the concentric ball $B_L (\xi)$. Notice that $B_l (\xi)$ is the translation of $B_l$ and $B_L (\xi)$ the translation $B_L$. Similarly consider the translations of $\varphi_1$, $\varphi_2$, $\vartheta$, which we still denote by the same symbols. 

We now observe by \eqref{final-comp}, \eqref{so-far}, and \eqref{laplace-q-estimate}, that
\[
\begin{cases}
\Delta \varphi_1 = c^2 \varphi_1, &\text{in } B_l (\xi),\smallskip\\
\varphi_1 = \bar{q}, &\text{on } \partial B_l (\xi),
\end{cases}
\qquad
\begin{cases}
\Delta Q(u_{4R}) \geq c^2 Q(u_{4R}), &\text{in } B_l (\xi),\smallskip\\
Q(u_{4R}) \leq \bar{q}, &\text{on } \partial B_l (\xi),
\end{cases}
\]
hence, by the maximum principle for $W^{1,2}$ solutions (see \cite{gilbarg-trudinger}), we have 
\begin{equation}
Q(u_{4R}) \leq \varphi_1, \text{ in } B_l (\xi).
\end{equation}
Also, by \eqref{quasifinal-comp}, (i) and (iv) of Lemma \ref{lemma3}, and \eqref{so-far},
\[
\begin{cases}
\Delta \varphi_2 = 0, &\text{in } B_L (\xi) \setminus \overline{B_l (\xi)},\smallskip\\
\varphi_2 = \bar{q}, &\text{on } \partial B_l (\xi),\smallskip\\
\varphi_2 = \overline{Q}, &\text{on } \partial B_L (\xi), 
\end{cases}
\qquad
\begin{cases}
\Delta Q(u_{4R}) \geq 0, &\text{in } B_L (\xi),\smallskip\\
Q(u_{4R}) \leq \varphi_2, &\text{on } \partial( B_L (\xi) \setminus \overline{B_l (\xi)}),
\end{cases}
\]
hence,
\begin{equation}
Q(u_{4R}) \leq \varphi_2, \text{ in } B_L (\xi) \setminus \overline{B_l (\xi)}.
\end{equation}
We deduce therefore by Lemma \ref{lemma6} that
\begin{equation}\label{q-deduce}
\begin{cases}
Q(u_{4R}) \leq \varphi, &\text{in } B_L (\xi),\smallskip\\
Q(u_{4R}) \leq \vartheta, &\text{in } B_{l+\delta} (\xi) \setminus \overline{B_{l-\delta} (\xi)},\smallskip\\
Q(u_{4R}) \leq \sigma \leq \bar{q}^{\prime} < \bar{q}, &\text{in } \overline{B_{l+\delta^{\prime}} (\xi)}
\end{cases}
\end{equation}
Thus, we see from (iii) of \eqref{q-deduce} that the estimate \eqref{so-far} holds on a set larger than $B_{R^*} (x_R)$. Clearly, by repeating this process we obtain \eqref{lemma11-estimate}.
\end{proof}

We are now able to finish the proof of the theorem.
\begin{proof}[Proof of Theorem \ref{theorem1}]
First note that if $q : D_{R}^{*} \to \R$ is the solution to 
\begin{equation}\label{solutionto}
\Delta q = c^2 q, \text{ in } D_{R}^{*},\smallskip\\
q = \bar{q}^{\prime}, \text{ on } D_{R}^{*},
\end{equation}
then,
\begin{equation}\label{exp-estimate}
q(x) \leq K \mathrm{e}^{-k d(x, \partial D_{R}^{*})}, \text{ for } x \in D_{R}^{*}, 
\end{equation}
for positive constants $K$, $k$, independent of $R$.

Indeed, by the maximum principle, there holds $q \leq \bar{q}^{\prime}$. It follows that if $\varphi$ is the solution of equation \eqref{solutionto} on the ball with center $x$ and radius $d(x, \partial D_{R}^{*})$ and with boundary condition $\varphi = \bar{q}^{\prime}$, we have $q \leq \varphi$. This and the estimate \eqref{ex-bound} in (ii) of Lemma \ref{lemma4} imply \eqref{exp-estimate}. Moreover, we note that if $d_0 >0$ is as in \eqref{drstar-supset}, then 
\begin{equation}\label{exp-estimate2}
q(x) \leq K \mathrm{e}^{-k d(x, \partial D_{4R})}, \text{ in } B_{d_0} (x) \subset D_{4R},
\end{equation}
since $d(x, \partial D_{4R}) \leq d(x, \partial D_{R}^{*}) + d_0$. This last inequality follows from \eqref{drstar-supset} with a new constant $K$.

Now utilizing \eqref{so-far} and $\Delta Q(u_{4R}) \geq c^2 Q(u_{4R})$, in $W^{1,2}_{\mathrm{loc}}(D_{R}^{*})$, by \eqref{laplace-q-estimate}, we obtain by comparing with \eqref{solutionto} that $Q(u_{4R}(x)) \leq q(x)$, for $x \in D_{R}^{*}$, and, therefore, by \eqref{exp-estimate2} and \eqref{q-list-c},
\begin{equation}\label{uni-bound}
|u_{4R} (x) - a_1| \leq K \mathrm{e}^{-k d(x, \partial D_{4R})}.
\end{equation} 
The uniform bound in (i) of Lemma \ref{lemma3} and elliptic regularity, via a diagonal argument, allow us to pass to the limit along a subsequence in $R$ and capture a function
\[
u(x) = \lim_{R_{n} \to \infty} u_{R_n} (x).
\]
The uniform bound \eqref{uni-bound} implies that the limit function satisfies the exponential estimate in the theorem and is also a solution to 
\[
\Delta u -W_u(u) = 0, \text { in } \R^n,
\]
by (ii) of Lemma \ref{lemma3} (and the comment after its proof). Clearly, also $u \in \mathcal{U}^{\mathrm{Pos}}$. 

Finally we argue the strong positivity for $u(x)$ with respect to $D$. Take now an open connected set $U \subset D$ which contains some points far enough from $\partial D$ so that by the exponential estimate there holds $u(U) \cap D \neq \varnothing$. Define $\Gamma^\prime := \{ \gamma \in \Gamma \mid \pi_\gamma \cap D \neq \varnothing \}$; then, $D = \cap_{\gamma \in \Gamma \setminus \Gamma^\prime} \mathcal{P}_{\gamma}^{+}$ (see Lemmas 2.1 and 5.1  in \cite{alikakos-fusco}). As in the second part  of the proof of Theorem \ref{theorem2}, particularly \eqref{phi-problem}, set $z(x) = \langle u(x), \eta_\gamma \rangle$, for $x \in U$ and $\gamma \in \Gamma \setminus \Gamma^\prime$. Then,
\[
\begin{cases}
\Delta z + cz = 0, &\text{in } U,\smallskip\\
z \geq 0, &\text{in } U. \quad \text{(by positivity)}
\end{cases}
\]
By a well-known variant of the strong maximum principle, there holds $z > 0$ in $U$, unless $z \equiv 0$. Triviality is excluded by $u(U) \cap D \neq \varnothing$ above. From this, strong positivity follows.

The proof is complete.
\end{proof}

\section*{Appendix}\label{appendix}
We state a special case of the {\em De Giorgi oscillation lemma} which was originally established for general elliptic operators $\mathcal{L} u := \dv (A(x)\nabla u)$, with bounded, measurable coefficients (see \cite[p.\ 195]{caffarelli-salsa}).
\begin{de-lemma}[De Giorgi]\label{app-lemma}
Consider the ball $\hat{B}_1 = \{ y \in \R^n \mid |y| \leq 1 \}$ and a function $\hat{v} = \hat{v}(y)$ which satisfies
\begin{enumerate}
\item $\Delta_y \hat{v} \geq 0$, in $W^{1,2}_{\mathrm{loc}} (\hat{B}_1)$,
\item $\hat{v} \leq 1$, in $\hat{B}_1$,
\item ${|\hat{B}_{1}^{-}|} / {|\hat{B}_1|} \geq a^* > 0$, for $\hat{B}_{1}^{-} = \{ y \in \hat{B}_{1} \mid \hat{v}(y) \leq 0 \}$,
\end{enumerate}
where $|\Omega|$ is the Lebesgue measure of the set $\Omega$. Then, 
\[ \sup_{\hat{B}_{1/2}} \hat{v} \leq \mu (a^*) < 1. \]
\end{de-lemma}

Notice that the statement of the lemma is invariant under the scaling $y \to \lambda y$, for $\lambda > 0$, hence $\hat{B}_1$ can be replaced by $\hat{B}_\lambda$ and $\hat{B}_{1/2}$ by $\hat{B}_{\lambda /2}$, without affecting $\mu (a^*)$.

\section*{Acknowledgments}
Thanks are due to Giorgio Fusco for discussions on the contents of the paper, to Panagiotis Smyrnelis for pointing out two errors in a former version of the manuscript, and to Apostolos Damialis for his suggestions for improving the presentation.

\nocite{*}
\bibliographystyle{plain}

\end{document}